\newtheorem{Prop}{Proposition}[section]
\newtheorem{Thm}[Prop]{Theorem}
\newtheorem{Lem}[Prop]{Lemma}
\theoremstyle{definition}
\newtheorem{Def}[Prop]{Definition}
\newtheorem{Rem}[Prop]{Remark}
\newcommand{\R}{{\mathbb R}}
\newcommand{\Z}{{\mathbb Z}}
\newcommand{\GL}{\mathrm{GL}}
\newcommand{\SO}{\mathrm{SO}}
\newcommand{\OO}{\mathrm{O}}
\newcommand{\RH}{\R \mathrm{H}}
\newcommand{\Aut}{\mathrm{Aut}}
\newcommand{\rnum}[1]{\expandafter{\romannumeral #1}}
\newcommand{\trans}{{{}^t\!}}
\newcommand{\R}{{\mathbb R}}
\newcommand{\Z}{{\mathbb Z}}
\newcommand{\LG}{{\mathfrak g}}
\newcommand{\LH}{{\mathfrak h}}
\newcommand{\Oo}{\mathcal{O}}
\newcommand{\LLM}{{\mathfrak M}}
\newcommand{\nul}{\mathrm{null}}
\newcommand{\sign}{\mathrm{sign}}
\newcommand{\rad}{\mathrm{rad}}
\newcommand{\Aut}{\mathrm{Aut}}
\newcommand{\GL}{\mathrm{GL}}
\newcommand{\SO}{\mathrm{SO}}
\newcommand{\OO}{\mathrm{O}}
\newcommand{\s}{\mathrm{span}}
\newcommand{\RH}{\R \mathrm{H}}
\newcommand{\RAutg}{\R^\times \Aut(\LG)}
\def\hoge<#1>{\langle #1 \rangle} 
\title[A classification of left-invariant pseudo-Riemannian metrics]{A classification of left-invariant pseudo-Riemannian metrics\\ on some nilpotent Lie groups} 
\author{Yuji Kondo} 
\address[Y.~Kondo]{Department of Mathematics, Hiroshima University, 
Higashi-Hiroshima, 739-8526 Japan} 
\email{yuji-kondo@hiroshima-u.ac.jp}
\thanks{This work was partly supported by Osaka City University Advanced Mathematical Institute (MEXT Joint Usage/Research Center on Mathematics and Theoretical Physics). This work was supported by the Research Institute for Mathematical Sciences, an International Joint Usage/Research Center located in Kyoto University.}
\date{}
\subjclass[2020]{Primary 53C30; Secondary 53C50}
\keywords{left-invariant metrics on Lie groups, pseudo-Riemannian metrics, Heisenberg
group, parabolic subgroups, pseudo-Riemannian symmetric spaces.}
\begin{document} 

\maketitle

\begin{abstract}
It is known that a connected and simply-connected Lie group admits only one left-invariant Riemannian metric up to scaling and isometry if and only if it is isomorphic to the Euclidean space, the Lie group of the real hyperbolic space, or the direct product of the three dimensional Heisenberg group and the Euclidean space of dimension $n-3$. In this paper, we give a classification of left-invariant pseudo-Riemannian metrics of an arbitrary signature for the third Lie groups with $n \geq 4$ up to scaling and automorphisms. This completes the classifications of left-invariant pseudo-Riemannian metrics for the above three Lie groups up to scaling and automorphisms.
\end{abstract}


\section{Introduction}
\label{sec1}

In differential geometry, it is one of the central and fundamental problems to determine whether a given differentiable manifold admits some distinguished geometric structures or not. Such structures can be, for example, Einstein or Ricci soliton metrics (cf.\ \cite{Cao, Topping}) for the setting of Riemannian or pseudo-Riemannian manifolds, and K\"{a}hler-Einstein metrics for K\"{a}hler manifolds. When one deals with these problems, it would be natural and useful to add some other properties, such as homogeneity.

We focus on the problem whether a given Lie group admits distinguished left-invariant metrics or not, both for the Riemannian and pseudo-Riemannian cases. Left-invariant metrics on Lie groups have supplied many examples of distinguished metrics, and have been studied actively. For example, we refer to \cite{Barnet, CR, Lauret4, Lauret5, Milnor, Nomizu, Onda, Will} and references therein. In particular, we mention that the Alekseevskii's conjecture has been recently proved in \cite{BL}, which had been an open problem on homogeneous Einstein manifolds with negative scalar curvature. However, even if we consider the Riemannian cases, the present state is far from the complete.

If one can classify left-invariant metrics on a given Lie group, then it would be useful to determine the existence and non-existence of distinguished metrics. Regarding left-invariant Riemannian metrics, Lauret (\cite{Lauret}) classified connected and simply-connected Lie groups which admit only one left-invariant Riemannian metric up to scaling and isometry. Such a Lie group is isomorphic to one of
\begin{align}
\label{Lie groups with moduli=1}
\R^n, \quad G_{\RH^n} \ (n \geq 2), \quad H_3 \times \R^{n-3} \ (n \geq 3),
\end{align}
where $G_{\RH^n}$ is so-called the Lie group of the real hyperbolic space $\RH^n$ (the solvable part of the Iwasawa decomposition of the identity component $\SO^0(n, 1)$ of $\SO(n, 1)$, and acts simply-transitively on $\RH^n$), and $H_3$ is the three dimensional Heisenberg group. It is well-known that their metrics are flat, negative constant sectional curvature and Ricci soliton, respectively. For other studies on classifications of left-invariant Riemannian metrics, we refer to \cite{HL, HT, KTT, Milnor} and references therein.

We are interested in the classifications of left-invariant pseudo-Riemannian metrics on Lie groups. In the three-dimensional cases, left-invariant Lorentzian metrics have been studied in \cite{CP, Rahmani, RR}. For higher dimensional cases, it seems to be natural that we first consider the above three Lie groups, $\R^n$, $G_{\RH^n}$ and $H_3 \times \R^{n-3}$. For any signature, it is obvious that $\R^n$ admits only one left-invariant pseudo-Riemannian metric up to scaling and isometry, which is flat. For any non-Riemannian signature on $G_{\RH^n}$ ($n \geq 2$), there exist exactly three left-invariant pseudo-Riemannian metrics up to scaling and isometry, all of them have constant sectional curvatures (\cite{KOTT}). For the case of $H_3$, there exist exactly three left-invariant Lorentzian metrics up to scaling and isometry (\cite{Rahmani, RR}), and only one of them is flat and the other two are Ricci solitons but not Einstein (\cite{Nomizu, Onda, RR}).

In this paper, we consider left-invariant pseudo-Riemannian metrics on $H_3 \times \R^{n-3}$ with $n \geq 4$, and classify them up to scaling and automorphisms defined as follows.

\begin{Def}
\label{def : up to scaling and auto}
Let $g_1$ and $g_2$ be left-invariant pseudo-Riemannian metrics on a Lie group $G$. Then, $(G, g_1)$ and $(G, g_2)$ are said to be {\it equivalent up to scaling and automorphisms} if there exist $c>0$ and a Lie group automorphism $\varphi: G \to G$ such that for any $a \in G$ and $x, y \in T_aG$, they satisfy
\begin{align*}
g_1(x, y)_a=cg_2(d\varphi_a(x), d\varphi_a(y))_{\varphi(a)},
\end{align*}
where $T_aG$ is the tangent space to $G$ at $a$, and $d\varphi_a$ is the differential map of $\varphi$ at $a$.
\end{Def}

By Definition~\ref{def : up to scaling and auto}, if $(G, g_1)$ and $(G, g_2)$ are equivalent up to scaling and automorphisms, then they are isometric up to scaling. Note that the converse is not necessarily true (see Remark~\ref{rem : up to isometry}). In the preceding study \cite{KT}, it has been shown that there exist exactly six left-invariant Lorentzian metrics on $H_3 \times \R^{n-3}$ with $n \geq 4$ up to scaling and automorphisms. The main result of this paper is a classification of left-invariant pseudo-Riemannian metrics of an arbitrary signature on $H_3 \times \R^{n-3}$ with $n \geq 4$ up to scaling and automorphisms.

\begin{Thm}
\label{thm : classification of left-inv pseudo-Riem. of arbitrary signature}
Let $p, q \in \Z_{\geq 1}$ with $p+q \geq 4$. Then the number of left-invariant pseudo-Riemannian metrics of signature $(p, q)$ on $H_3 \times \R^{p+q-3}$ up to scaling and automorphisms is as follows$:$
\begin{itemize}
\item[(1)] $21$ if $p, q \geq 3$. 
\item[(2)] $15$ if $p \geq 3$ and $q=2$.
\item[(3)] $6$ if $p \geq 3$ and $q=1$.
\item[(4)] $10$ if $p=q=2$.
\end{itemize}
\end{Thm}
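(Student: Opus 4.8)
The plan is to reduce the classification of left-invariant pseudo-Riemannian metrics of signature $(p,q)$ on $G = H_3 \times \R^{p+q-3}$ to a linear-algebraic problem on the Lie algebra $\LG = \LH_3 \oplus \R^{p+q-3}$, and then to solve that problem by exploiting the structure of the automorphism group. First I would recall the standard identification: a left-invariant pseudo-Riemannian metric on $G$ corresponds to a nondegenerate symmetric bilinear form on $\LG$ of signature $(p,q)$, and two such metrics are equivalent up to scaling and automorphisms precisely when the corresponding bilinear forms lie in the same orbit of the group $\R^{\times}\Aut(\LG)$ acting on the space of nondegenerate symmetric bilinear forms (this is exactly the content of Definition~\ref{def : up to scaling and auto}). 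So the theorem amounts to counting the orbits of $\R^{\times}\Aut(\LG)$ on the set of symmetric bilinear forms of signature $(p,q)$.

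The key structural input is that $\LG$ has a distinguished one-dimensional subspace, namely the center's derived part: writing $\LH_3 = \s\{X, Y, Z\}$ with $[X,Y] = Z$, the line $\R Z = [\LG,\LG]$ is canonically preserved by every automorphism, and the center is $\R Z \oplus \R^{p+q-3}$. I would first compute $\Aut(\LG)$ explicitly (or cite its description from the earlier sections / from \cite{KT}): an automorphism must send $Z$ to a nonzero multiple of itself, must preserve the symplectic pairing on $\LG / (\R Z \oplus \R^{p+q-3})$ up to that multiple, and acts rather freely on the complementary abelian directions. The orbit problem then decomposes according to how the bilinear form $\langle\cdot,\cdot\rangle$ interacts with this flag. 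The relevant invariants are: (i) whether $Z$ is null, or $\langle Z, Z\rangle \neq 0$ (and after scaling, its sign); (ii) the signature of the form restricted to the center, or more precisely to $\R^{p+q-3}$ and to $(\R Z)^{\perp}$; (iii) the rank and signature of the induced form on the two-dimensional symplectic quotient. Because $\Aut(\LG)$ contains a large "parabolic-type" subgroup, one can normalize the off-diagonal interactions, so the number of orbits is governed by a short list of discrete invariants subject to the constraint that the total signature is $(p,q)$.

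Concretely, I would set up a normal form: choose a complement and write the Gram matrix of $\langle\cdot,\cdot\rangle$ in block form relative to the decomposition $\R X \oplus \R Y \oplus \R Z \oplus \R^{p+q-3}$, then use the explicit automorphisms (scalings of $Z$, symplectic transformations of the $(X,Y)$-plane, shears adding center elements to $X, Y$, and $\OO(p',q')$-type transformations of the flat part together with moves that mix the flat part into $X, Y, Z$) to kill as many entries as possible and to reduce the surviving ones to signs. This produces a finite list of candidate normal forms parametrized by the discrete data above; the counts $21, 15, 6, 10$ should emerge by enumerating admissible values of these invariants in each range of $(p,q)$, noting that when $q = 1$ (the Lorentzian case) many configurations are forbidden by the signature constraint, recovering the six metrics of \cite{KT}, and when $p = q = 2$ there is an extra symmetry swapping the roles of $+$ and $-$ that must be quotiented out, lowering the count. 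The final, and somewhat delicate, bookkeeping step is to prove that the normal forms on the list are pairwise \emph{inequivalent}; for that I would attach to each a complete set of $\R^{\times}\Aut(\LG)$-invariants — e.g. the signature of $\langle\cdot,\cdot\rangle|_{(\R Z)^{\perp}/\R Z}$, whether $Z$ is null, the signature of the restriction to the flat abelian ideal, and the causal type of the center — and check these invariants separate the list.

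The main obstacle I anticipate is the inequivalence proof rather than the production of normal forms: one must be careful that the automorphism group, which mixes the nilpotent directions $X, Y, Z$ with the abelian $\R^{p+q-3}$, does not secretly identify two forms that look different, so the set of invariants has to be genuinely complete and invariant under \emph{all} of $\R^{\times}\Aut(\LG)$ (in particular under the "non-obvious" automorphisms that move flat directions into $X$ or $Y$). A secondary subtlety is organizing the case analysis so that the boundary cases $q = 2$ and $p = q = 2$ are handled uniformly with the generic $p, q \geq 3$ case rather than by ad hoc arguments, and making sure the degenerate-on-the-center versus nondegenerate-on-the-center dichotomy is tracked correctly when, say, $p+q-3$ is small. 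Once the normal forms and the separating invariants are in place, assembling the four counts is a finite enumeration.
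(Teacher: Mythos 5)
Your reduction to counting orbits of $\R^\times\Aut(\LG)$ on inner products of signature $(p,q)$ is the same starting point as the paper, but from there the routes diverge: the paper computes no normal forms at all. It identifies $\RAutg\backslash\LLM_{(p,q)}(\LG)$ with the double coset space $\OO(p,q)\backslash\GL(n,\R)/\RAutg$, i.e.\ with the orbits of $\OO(p,q)$ on the flag manifold $F_{1,p+q-2}$ of flags $V_1\subset V_{p+q-2}$, and proves (Proposition~\ref{prop : flag mfd iff signature and radical}) that such an orbit is completely determined by $\sign V_{p+q-2}$, $\sign V_1$ and $\dim(V_1\cap\rad V_{p+q-2})$; the four counts then follow from a uniform, signature-independent enumeration of the admissible triples (Propositions~\ref{prop : signature on V_{p+q-2}} and \ref{prop : sign_V W}). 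Your plan is essentially the matrix-normal-form method of \cite{KT}, which the paper explicitly set out to avoid because it forces a case analysis depending on the signature. That route is not wrong in principle, but as written it leaves the entire mathematical content --- producing the normal forms and, above all, proving that the list of invariants is complete, i.e.\ that no two normal forms are secretly identified by automorphisms mixing the flat directions into $X$, $Y$, $Z$ --- as an acknowledged ``delicate bookkeeping step.'' That step is precisely where the work lies, and it is what the paper's flag-manifold propositions replace.

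Two concrete points in your sketch would fail if carried out literally. First, ``the signature of the restriction to the flat abelian ideal'' is not an invariant: the subspace $\R^{p+q-3}$ is not canonical, since automorphisms such as $e_2\mapsto e_2+e_1$ (with $e_1$ spanning $[\LG,\LG]$ and $e_2$ central) move it while preserving the bracket. The honest invariants are the signature of the restriction to the full center $Z(\LG)=[\LG,\LG]\oplus\R^{p+q-3}$ together with the position of the line $[\LG,\LG]$ relative to $\rad Z(\LG)$ --- exactly the data the paper isolates. Second, your explanation of the drop from $21$ to $10$ when $p=q=2$ by ``an extra symmetry swapping $+$ and $-$ that must be quotiented out'' is incorrect: for a fixed signature $(2,2)$ no element of $\R^\times\Aut(\LG)$ (nor of $\OO(2,2)$) can interchange positive and negative directions of the form, so none of the generic $21$ classes get identified with one another. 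The count drops because eleven of those classes simply cannot occur --- for instance, a codimension-two subspace of signature $(p-2,q,0)=(0,2,0)$ contains no space-like line and no light-like line outside its radical, which kills classes $(1)$ and $(3)$ of Table~\ref{table : The number of equivalence classes}. An enumeration based on your stated mechanism would not produce the number $10$.
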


Note that, for any $p, q \in \Z_{\geq 0}$ and a Lie group $G$, one has the correspondence
\begin{align*}
\left\{\begin{array}{c}
		\mbox{a left-invariant metric}\\
		 \mbox{of signature $(p, q)$ on $G$}
	\end{array}\right\} \ \overset{1:1}{\longleftrightarrow} \ \left\{\begin{array}{c}
		\mbox{a left-invariant metric}\\ 
		\mbox{of signature $(q, p)$ on $G$}
	\end{array}\right\}.
\end{align*}
Therefore Theorem~\ref{thm : classification of left-inv pseudo-Riem. of arbitrary signature} gives a classification for every signature. Recall that $H_3 \times \R^{n-3}$ admits only one left-invariant Riemannian metric for $n \geq 3$, and exactly three left-invariant Lorentzian metrics for $n=3$. Combining these results with Theorem~\ref{thm : classification of left-inv pseudo-Riem. of arbitrary signature}, one has the next table of the number of left-invariant Riemannian and pseudo-Riemannian metrics of signature $(p, q)$ on $H_3 \times \R^{n-3}$. Note that our theorem completes the classifications of such metrics up to scaling and automorphisms on Lie groups in (\ref{Lie groups with moduli=1}).

\begin{table}[h]
\caption{The number of left-invariant metrics on $H_3 \times \R^{n-3}$ up to scaling and automorphisms}
\begin{center}
\begin{tabular}{|c||c|c|c|c|c|c|}\hline
\backslashbox{$p$}{$q$}&0&1&2&3&4&$\cdots$\\\hline\hline
0&&&&1&1&$\cdots$\\\hline
1&&&3&6&6&$\cdots$\\\hline
2&&3&10&15&15&$\cdots$\\\hline
3&1&6&15&21&21&$\cdots$\\\hline
4&1&6&15&21&21&$\cdots$\\\hline
$\vdots$&$\vdots$&$\vdots$&$\vdots$&$\vdots$&$\vdots$&$\ddots$\\\hline
\end{tabular}
\end{center}
\end{table}

In the proof of Theorem~\ref{thm : classification of left-inv pseudo-Riem. of arbitrary signature}, the key idea is a group action on a flag manifold. In fact, the equivalence classes of left-invariant pseudo-Riemannian metrics of signature $(p, q)$ on $H_3 \times \R^{n-3}$ up to scaling and automorphisms correspond to the orbits of the group action of the parabolic subgroup of the block decomposition $(1, n-3, 2)$
\begin{align}
\label{the form of parabolic}
\left\{\left(
		\begin{array}{c|ccc|cc}
		\ast&\ast&\cdots&\ast&\ast&\ast\\ \hline
		0&\ast&\cdots&\ast&\ast&\ast\\ 
		\vdots&\vdots&\ddots&\vdots&\vdots&\vdots\\
		0&\ast&\cdots&\ast&\ast&\ast\\ \hline
		0&0&\cdots&0&\ast&\ast\\ 
		0&0&\cdots&0&\ast&\ast
		\end{array}
	\right) \in \GL(n, \R)\right\}
\end{align}
on $\GL(n, \R)/\OO(p, q)$, which is a pseudo-Riemannian symmetric space. Moreover, this action corresponds to the action of $\OO(p, q)$ on the flag manifold given by the above parabolic subgroup. With respect to the latter action, it has been already known that the number of the orbits is finite in \cite{Wolf}. Determining the orbit space of the latter action, we classified left-invariant pseudo-Riemannian metrics of signature $(p, q)$ on $H_3 \times \R^{n-3}$ up to scaling and automorphisms. In \cite{KT}, the classification of left-invariant Lorentzian metrics on this Lie group has been obtained by matrices calculations. However if one tries to classify left-invariant non-Lorentzian metrics on it by the same method, the procedure will be very complicated, since the method depends on the signature.  In this paper, we classify left-invariant pseudo-Riemannian metrics by a method which does not depend on the signature.

We here mention the curvature properties of left-invariant pseudo-Riemannian metrics on $H_3 \times \R^{n-3}$ with $n \geq 4$. Recall that there exist exactly six left-invariant Lorentzian metrics up to scaling and automorphisms (\cite{KT}). In this case, curvatures are completely calculated, and only one of them is flat and the other five are Ricci solitons but not Einstein (\cite{KT}). For the non-Lorentzian cases, we announce that the author has partially calculated curvatures (see Remark~\ref{rem : up to isometry}), which will be in the forthcoming paper.

The author would like to thank Hiroshi Tamaru, Takayuki Okuda and Akira Kubo for the helpful discussions and suggestions. The author also expresses the sincere gratitude to Toshihiko Matsuki for the valuable comments, which had a significant influence to this study.

\section{Preliminaries}
\label{sec2}

In this section, we recall general theories on inner products on vector spaces, which are not necessarily nondegenerate, and left-invariant pseudo-Riemannian metrics on Lie groups.


\subsection{Vector spaces with inner products}

In this subsection, we recall some terminologies on vector spaces with inner products used throughout this paper, and set notations. 

First of all, let us recall the signature of an inner product. Let $V$ be an $n$-dimensional real vector space, and $\hoge< , >$ be an inner product on it, which is not necessarily nondegenerate. Fix a basis $\{v_1, \ldots, v_n\}$ of $V$, and identify $V \cong \R^n$. Then there exists a real symmetric matrix $A$ such that for any $x, y \in V$,
\begin{align*}
\hoge<x, y>=\trans xAy.
\end{align*}
Since $A$ is a real symmetric matrix, every eigenvalue of $A$ is a real number. Note that $0$ can be its eigenvalue since $\hoge< , >$ is not necessarily nondegenerate. Then the triplet of the numbers of positive, negative and zero eigenvalues of $A$ counted with multiplicities is called the {\it signature} of $\hoge< , >$ on $V$, and we denote it by
\begin{align*}
\sign (V, \hoge< , >)=(p, q, r) \quad (p, q, r \in \Z_{\geq 0}).
\end{align*}
In the cases of $r=0$, that is, when $\hoge< , >$ is nondegenerate on $V$, we may write $\sign (V, \hoge< , >)=(p, q)$. If we do not need to specify $\hoge< , >$, we denote it by $\sign V$ for simplicity. We use this notation for a subspace $W$ of $V$ as well, that is, we denote the signature of $\hoge< , >\mid_{W \times W}$ on $W$ by
\begin{align*}
\sign (W, \hoge< , >\mid_{W \times W})=(s, t, u) \quad (s, t, u \in \Z_{\geq 0}).
\end{align*}
In this case, we write $\sign (W, \hoge< , >)$ or $\sign W$ for simplicity.

Next we recall the radical. The \textit{radical} $\rad (V, \hoge< , >)$ of $V$ with respect to $\hoge< , >$ is a subspace of $V$ such that its vector is orthogonal to every vector of $V$, that is,
\begin{align*}
\rad (V, \hoge< , >):=\{v \in V \mid \forall w \in V,\ \hoge<v, w>=0\}.
\end{align*}
Similarly, we may write $\rad V$ for simplicity. For a subspace $W$ of $V$, we simply denote the radical of $W$ with respect to $\hoge< , >\mid_{W \times W}$ by $\rad (W, \hoge< , >)$ or $\rad W$.

\subsection{The spaces of left-invariant pseudo-Riemannian metrics on Lie groups}

In this subsection, we recall the notion of the spaces of left-invariant pseudo-Riemannian metrics on Lie groups. This has been introduced in \cite{KOTT}. We refer to \cite{KTT} for the Riemannian case. In the following arguments, let $G$ be a real Lie group of dimension $n$, and $\LG$ be the corresponding Lie algebra. We fix a basis $\{e_1, \ldots, e_n\}$ of $\LG$, and identify $\LG \cong \R^n$ as vector spaces.

Let $p, q \in \Z_{\geq 1}$. Recall that a pseudo-Riemannian metric has signature $(p, q)$ if so is the induced inner product on each tangent space. We are interested in a classification of left-invariant pseudo-Riemannian metrics on $G$. For this purpose, we denote the space of left-invariant pseudo-Riemannian metrics by 
\begin{align*}
\LLM_{(p, q)}(G):=\{\mbox{a\ left-invariant\ metric\ of\ signature}\ (p, q)\ \mbox{on\ }G\}.
\end{align*}
We then consider the counterpart in the Lie algebra $\LG$ of $G$, and denote it by
\begin{align*}
\LLM_{(p, q)}(\LG):=\{\hoge< , > : \mbox{an inner product of signature}\ (p, q)\ \mbox{on}\ \LG\}.
\end{align*}
It is well-known that there exists a one-to-one correspondence between $\LLM_{(p, q)}(G)$ and $\LLM_{(p, q)}(\LG)$. Recall that we identify $\LG \cong \R^n$. Then $\GL(n, \R)$ acts transitively on $\LLM_{(p, q)}(\LG)$ by
\begin{align*}
g.\hoge<x, y>:=\hoge<g^{-1}x, g^{-1}y> \quad (\forall g \in \GL(n, \R),\ \forall x, y \in \LG).
\end{align*}

From now on, we explain the equivalence relation on inner products, which corresponds to the equivalence relation on $\LLM_{(p, q)}(G)$ given by Definition~\ref{def : up to scaling and auto}. Let us consider the automorphism group of $\LG$,
\begin{align*}
\Aut(\LG):=\{\varphi \in \GL(n, \R) \mid \forall x, y \in \LG,\ \varphi([x, y])=[\varphi(x), \varphi(y)]\},
\end{align*}
and also put $\R^\times :=\R \setminus \{0\}$. We study the group action by
\begin{align*}
\R^\times \Aut(\LG):=\{c\varphi \in \GL(n, \R) \mid c \in \R^\times,\ \varphi \in \Aut(\LG)\}.
\end{align*}
This is a subgroup of $\GL(n, \R)$, and thus it naturally acts on $\LLM_{(p, q)}(\LG)$. We denote the orbit through $\hoge< , >$ by $\RAutg.\hoge< , >$. 

\begin{Def}
\label{def : isometric up to scaling}
Let $\hoge< , >_1, \hoge< , >_2 \in \LLM_{(p, q)}(\LG)$. Then, $(\LG, \hoge< , >_1)$ and $(\LG, \hoge< , >_2)$ are said to be {\it equivalent up to scaling and automorphisms} if they satisfy
\begin{align*}
\hoge< , >_1 \in \R^\times \Aut(\LG).\hoge< , >_2.
\end{align*}
\end{Def}

This notion is an equivalence relation on $\LLM_{(p, q)}(\LG)$. If a given Lie group $G$ is connected and simply-connected, then one knows $\Aut(G) \cong \Aut(\LG)$, and therefore the classification of inner products on $\LG$ by the action of $\RAutg$ is equivalent to the classification of left-invariant pseudo-Riemannian metrics on $G$ up to scaling and automorphisms. Hence it is natural to study the following orbit space:
\begin{align*}
\R^\times \Aut(\LG) \backslash \LLM_{(p, q)}(\LG):=\{\R^\times \Aut(\LG).\hoge< , > \mid \hoge< , > \in \LLM_{(p, q)}(\LG)\}.
\end{align*}
This space can be regarded as the moduli space of left-invariant pseudo-Riemannian metrics on $G$ of signature $(p, q)$.

Finally in this subsection, we give a remark on a classification of left-invariant pseudo-Riemannian metrics on $G$ up to scaling and isometry, which is defined as follows.

\begin{Def}
\label{def : isometric up to scaling on Lie group}
Let $g_1, g_2 \in \LLM_{(p, q)}(G)$. Then, $(G, g_1)$ and $(G, g_2)$ are said to be {\it isometric up to scaling} and denoted by $g_1 \sim_G g_2$ if there exist $c>0$ and a diffeomorphism $\varphi: G \to G$ such that for any $a \in G$ and $x, y \in T_aG$,
\begin{align*}
g_1(x, y)_a=cg_2(d\varphi_a(x), d\varphi_a(y))_{\varphi(a)}.
\end{align*}
\end{Def}

One can define an equivalence relation $\sim_{\LG}$ on $\LLM_{(p, q)}(\LG)$ corresponding to $\sim_G$, that is, there exists a one-to-one correspondence
\begin{align*}
\LLM_{(p, q)}(G)/\sim_G \ \overset{1:1}{\longleftrightarrow} \ \LLM_{(p, q)}(\LG)/\sim_\LG.
\end{align*}
By Definition~\ref{def : up to scaling and auto}, if two left-invariant metrics are equivalent up to scaling and automorphisms, then they are isometric up to scaling. Thus there exists a surjection
\begin{align*}
\RAutg \backslash \LLM_{(p, q)}(\LG) \twoheadrightarrow \LLM_{(p, q)}(\LG)/\sim_\LG.
\end{align*}
In this paper, as mentioned above, we focus on the classification of inner products by the action of $\RAutg$. In order to obtain the classification up to $\sim_G$ or $\sim_{\LG}$, we need to distinguish elements in $\RAutg \backslash \LLM_{(p, q)}(\LG)$, which can be equivalent in the sense of $\sim_\LG$.

\section{An outline of the proof of the main theorem}
\label{sec3}

In this section, we describe an outline of the proof of Theorem~\ref{thm : classification of left-inv pseudo-Riem. of arbitrary signature}, which can be divided into some parts. In the first subsection, we consider the orbit-decomposition with respect to the action of an indefinite orthogonal group on a flag manifold. In the second subsection, we describe possible signatures on particular vector subspaces. We explain the statements of them without giving proofs, and we prove Theorem~\ref{thm : classification of left-inv pseudo-Riem. of arbitrary signature} in the last subsection.

Let $I_k$ be the unit matrix of order $k$, and put
\begin{align*}
I_{p, q}:=\left(
\begin{array}{cc}
I_p&\\
&-I_q
\end{array}
\right),
\end{align*}
where $p, q \in \Z_{\geq 1}$. We consider the standard inner product $\hoge< , >_0$ such that $\sign (\R^{p+q}, \hoge< , >_0)=(p, q)$, that is, it is defined by
\begin{align*}
\hoge<x, y>_0:=\trans xI_{p, q}y \quad (\forall x, y \in \R^{p+q}).
\end{align*}

\subsection{An orbit-decomposition of a flag manifold}

In this subsection, we describe the orbit-decomposition with respect to the action of the indefinite orthogonal group $\OO(p, q)$ on the flag manifold
\begin{align*}
F_{k_1, k_2}:=\{(V_{k_1}, V_{k_2}) \mid V_{k_1} \subset V_{k_2} \subset \R^{p+q},\ \dim V_{k_i}=k_i\ (i=1, 2)\},
\end{align*}
where $k_1, k_2 \in \{1, \ldots, p+q\}$ with $k_1<k_2$. Note that $\OO(p, q)$ acts on $F_{k_1, k_2}$ by
\begin{align*}
g.(V_{k_1}, V_{k_2}):=(gV_{k_1}, gV_{k_2}).
\end{align*}
For flags in $F_{k_1, k_2}$, an equivalent condition to be contained in the same $\OO(p, q)$-orbit is given in terms of the signatures as follows.

\begin{Prop}
\label{prop : flag mfd iff signature and radical}
For any $(V_{k_1}, V_{k_2}), (W_{k_1}, W_{k_2}) \in F_{k_1, k_2}$, the following conditions are equivalent.
\begin{itemize}
\item[(1)] There exists $g \in \OO(p, q)$ such that $(V_{k_1}, V_{k_2})=g.(W_{k_1}, W_{k_2})$.
\item[(2)] All of the following hold. 
	\begin{itemize}
	\item[(i)] $\sign (V_{k_2}, \hoge< , >_0)=\sign (W_{k_2}, \hoge< , >_0)$.
	\item[(ii)] $\sign (V_{k_1}, \hoge< , >_0)=\sign (W_{k_1}, \hoge< , >_0)$.
	\item[(iii)] $\dim (V_{k_1} \cap \rad (V_{k_2}, \hoge< , >_0))=\dim (W_{k_1} \cap \rad (W_{k_2}, \hoge< , >_0))$.
	\end{itemize}
\end{itemize}
\end{Prop}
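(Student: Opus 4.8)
The plan is to prove the nontrivial implication $(2)\Rightarrow(1)$ by reducing a flag to a normal form that depends only on the three numerical invariants listed in $(2)$. The implication $(1)\Rightarrow(2)$ is immediate: any $g\in\OO(p,q)$ preserves $\hoge< , >_0$, hence preserves signatures of subspaces, radicals of subspaces, and dimensions of intersections of subspaces, so $(1)$ forces $(\mathrm{i})$, $(\mathrm{ii})$ and $(\mathrm{iii})$.

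For $(2)\Rightarrow(1)$, I would first reduce to a purely intrinsic statement: it suffices to construct a linear isomorphism $\phi\colon V_{k_2}\to W_{k_2}$ which is an isometry for the restrictions of $\hoge< , >_0$ and which satisfies $\phi(V_{k_1})=W_{k_1}$. Indeed, by Witt's extension theorem (applicable since $\hoge< , >_0$ is nondegenerate on $\R^{p+q}$) such a $\phi$ extends to some $g\in\OO(p,q)$, and this $g$ carries $(V_{k_1},V_{k_2})$ to $(W_{k_1},W_{k_2})$.

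To build $\phi$, I would put the pair $V_{k_1}\subset V_{k_2}$ equipped with $\hoge< , >_0\mid_{V_{k_2}\times V_{k_2}}$ into a normal form. Set $R:=\rad(V_{k_2},\hoge< , >_0)$, $R_1:=V_{k_1}\cap R$, and $d:=\dim R_1$. Using $V_{k_2}^{\perp}\subseteq V_{k_1}^{\perp}$ one checks quickly that $R_1\subseteq\rad(V_{k_1},\hoge< , >_0)$ and $\rad(V_{k_1},\hoge< , >_0)\cap R=R_1$; hence, if $\sign(V_{k_1},\hoge< , >_0)=(a_1,b_1,r_1)$, then the image of $V_{k_1}$ in the nondegenerate quotient $\overline{V}:=V_{k_2}/R$ is isometric to $V_{k_1}/R_1$ and has signature $(a_1,b_1,r_1-d)$. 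Thus the data $\sign(V_{k_2},\hoge< , >_0)=(a_2,b_2,\dim R)$, the integer $d$, and the signature $(a_1,b_1,r_1-d)$ of the image of $V_{k_1}$ in $\overline V$ are all determined by $(\mathrm{i})$--$(\mathrm{iii})$. Now choose an adapted basis of $\R^{p+q}$: a basis of $R$ whose first $d$ vectors span $R_1$, together with hyperbolic partners in $\R^{p+q}\setminus V_{k_2}$ for these null vectors; a basis of $\overline V$ adapted to the image of $V_{k_1}$ that realizes the signatures above (an orthonormal basis of a nondegenerate complement, plus hyperbolic pairs absorbing the isotropic part of the radical of that image), lifted back into $V_{k_2}$ as a complement $P$ of $R$; and a lift of the image of $V_{k_1}$ into a complement of $R_1$ inside $V_{k_1}$, which can be pushed into $P$ by subtracting elements of $R$ without changing any inner products, because $R\subseteq V_{k_2}^{\perp}$. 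Completing to a basis $\{f_i\}$ of $\R^{p+q}$, one gets a Gram matrix and index subsets spanning $V_{k_1}$ and $V_{k_2}$ depending only on the invariants in $(2)$; running the same construction for $(W_{k_1},W_{k_2})$ produces $\{f_i'\}$ with identical combinatorial data, and $f_i\mapsto f_i'$ gives the desired $g\in\OO(p,q)$ outright.

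The step I expect to be the main obstacle is the bookkeeping in this construction: one must arrange the radical $R$ of $V_{k_2}$, its subspace $R_1\subseteq V_{k_1}$, the hyperbolic partners of all the isotropic directions, the definite parts of $V_{k_1}$ and $V_{k_2}$, and the remaining ambient complement to be mutually compatible, and then verify that the resulting normal form is genuinely a function of $(\mathrm{i})$--$(\mathrm{iii})$ alone. One point worth isolating is that there is always enough room in $\R^{p+q}$ to accommodate the required hyperbolic planes and definite subspaces; this is automatic, since $V_{k_1}$ and $V_{k_2}$ already sit inside $\R^{p+q}$, so the needed isotropic partners exist there. Alternatively, the entire construction can be streamlined by applying Witt's extension theorem twice — once inside $\overline V$ to normalize the image of $V_{k_1}$, and once for $\R^{p+q}$ to lift the whole picture back — which is the route I would ultimately take to keep the argument short.
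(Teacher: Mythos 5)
Your proposal is correct, and at bottom it runs on the same two engines as the paper's proof: a Witt-type extension theorem for the nondegenerate ambient space (the paper's Proposition~\ref{prop : extension of isometry on W to an isometry on V}) and adapted bases realizing the numerical invariants (the $(p,q,r)$-bases of Proposition~\ref{prop : extension of a given basis with the weakest assumption and the strongest result}). The organization differs, though. The paper never forms the quotient $V_{k_2}/\rad (V_{k_2}, \hoge< , >_0)$: it first moves $W_{k_2}$ onto $V_{k_2}$ by an ambient isometry $f$, then invokes an auxiliary classification (Proposition~\ref{prop : generalization of O(p, q, r) on G_k(R^{p+q+r})}) of subspaces of the possibly \emph{degenerate} space $V_{k_2}$ up to isometries of $V_{k_2}$ --- which is precisely where invariant (iii) enters --- to move $f(W_{k_1})$ onto $V_{k_1}$, and finally extends that inner isometry to $\R^{p+q}$. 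You instead assemble a single normal form for the whole flag organized around $R=\rad V_{k_2}$, $R_1=V_{k_1}\cap R$ and the nondegenerate quotient $\overline V$; your closing ``apply Witt twice'' variant is essentially the paper's two-stage matching transplanted from $V_{k_2}$ to $\overline V$. Both routes work: the paper's yields a reusable statement about subspaces of degenerate spaces, while yours makes it more visibly a normal-form computation that the orbit depends on exactly the three listed invariants.

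One ordering point in your basis construction deserves care: if you fix the lift $P$ of $\overline V$ first and then translate a complement $C$ of $R_1$ in $V_{k_1}$ into $P$ by subtracting elements of $R$, the translated subspace together with $R_1$ need no longer span $V_{k_1}$. Choose $C\subseteq V_{k_1}$ first and then take the lift $P$ to contain $C$ (equivalently, lift those basis vectors of $\overline V$ lying in the image of $V_{k_1}$ to actual vectors of $V_{k_1}$). This is a fixable detail of bookkeeping, not a gap in the idea.
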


We will give the proof of this proposition in Section~\ref{sec4}. From this proposition, each $\OO(p, q)$-orbit through $(V_{k_1}, V_{k_2}) \in F_{k_1, k_2}$ is characterized only by the three data
\begin{align*}
\sign V_{k_1}, \quad \sign V_{k_2}, \quad \dim (V_{k_1} \cap \rad V_{k_2}).
\end{align*}

\begin{Rem}
\label{rem : Matsuki duality}
For a reductive affine symmetric space $(G, H, \sigma)$ and its associated affine symmetric space $(G, H', \sigma \theta)$, Matsuki (\cite{Matsuki, Matsuki1}) showed the correspondence between the double cosets, that is, one has 
\begin{align*}
H \backslash G/P\ \overset{1:1}{\longleftrightarrow} \ H' \backslash G/P,
\end{align*}
where $P$ is a parabolic subgroup of $G$. This correspondence is called the \textit{Matsuki duality} (\textit{correspondence}). We consider $\RAutg$ for $\LG:=\LH_3 \oplus \R^{n-3}$ with $n \geq 4$, which is given in the form of (\ref{the form of parabolic}). Therefore in the case of this paper, we put
\begin{align}
\label{setting of Matsuki duality in this paper}
G:=\GL(n, \R), \quad H:=\OO(p, q), \quad P:=\RAutg.
\end{align}
Then one has $H'=\GL(p, \R) \times \GL(q, \R)$, and hence we have the setting of the Matsuki duality. 

On the other hand, one can also determine the orbit space of the action of $H'$ on $G/P=F_{1, n-2}$ in the setting (\ref{setting of Matsuki duality in this paper}). For this purpose, we put
\begin{align*}
U^+:=\s\{e_1, \ldots, e_p\}, \quad U^-:=\s\{e_{p+1}, \ldots, e_{p+q}\},
\end{align*}
where $\{e_1, \ldots, e_{p+q}\}$ is the standard basis of $\R^{p+q}$, and consider the following data for any $(V_1, V_{n-2}) \in F_{1, n-2}$:
\begin{align*}
&c^+:=\dim (V_{n-2} \cap U^+), \quad c^-:=\dim (V_{n-2} \cap U^-), \quad c^0:=n-2-c^+-c^-,\\
&d^+:=\dim (V_1 \cap U^+), \quad d^-:=\dim (V_1 \cap U^-), \quad d^0:=1-d^+ -d^-,\\
&d^\pm:=\dim \left(((V_{n-2} \cap U^+) \oplus (V_{n-2} \cap U^-)) \cap V_1\right).
\end{align*}
Then every orbit of $H'$ on $G/P=F_{1, n-2}$ is determined by the above seven data. This fact is essentially the same as Proposition~\ref{prop : flag mfd iff signature and radical} in the case of $k_1=1$ and $k_2=n-2$.

\end{Rem}

\subsection{Possible signatures on some subspaces}

Recall that the $\OO(p, q)$-orbit through $(V_{k_1}, V_{k_2}) \in F_{k_1, k_2}$ is characterized by the three data. In this subsection, we here describe all possible three data for the case $k_1=1$ and $k_2=p+q-2$. The next proposition describes all possible $\sign V_{p+q-2}$.

\begin{Prop}
\label{prop : signature on V_{p+q-2}}
Let $A$ be the set of all possible signatures of codimension-two subspaces of $\R^{p+q}$ with respect to $\hoge< , >_0$, that is,
\begin{align*}
A:=\{\sign (V, \hoge< , >_0) \mid V \subset \R^{p+q},\ \dim V=p+q-2\}.
\end{align*}
Then one has 
\begin{align*}
A=\left\{
\begin{array}{lll}
	(p-2, q, 0), &(p-1, q-1, 0), &(p, q-2, 0),\\
	(p-2, q-1, 1), &(p-1, q-2, 1), &(p-2, q-2, 2)
\end{array}\right\} \cap (\Z_{\geq 0})^3.
\end{align*}
\end{Prop}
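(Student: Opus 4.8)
The plan is to determine the set $A$ by a two-step argument. First I would establish the upper bound: the signature of a codimension-two subspace $V$ cannot be ``too far'' from $(p,q,0)$. Concretely, passing from $\R^{p+q}$ to a subspace of codimension one can change each of the three entries of the signature by a controlled amount (the index of positivity can drop by at most one, the index of negativity can drop by at most one, and the nullity can increase by at most one), and iterating this once more for codimension two yields that the only candidates are the six triples listed, intersected with $(\Z_{\geq 0})^3$ to discard the cases where $p$ or $q$ is too small. I would phrase this cleanly using the fact that for a hyperplane $H = w^{\perp_{\text{lin}}}$ (the kernel of a nonzero linear functional), $\sign H$ is controlled by whether the functional can be represented by a vector that is timelike, spacelike, null, or by the behaviour of $\rad H$; in fact it is easiest to note that if $\hoge< , >_0$ has signature $(p,q,0)$ on $\R^{p+q}$ and $W \subset \R^{p+q}$ has codimension one, then $\sign W \in \{(p-1,q,0),(p,q-1,0),(p-1,q-1,1)\}$, and then apply this statement twice.

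Second, I would prove that each of the six triples (when it lies in $(\Z_{\geq 0})^3$) is actually realized, by writing down an explicit codimension-two subspace. Using the standard basis $\{e_1,\dots,e_{p+q}\}$ with $\hoge<e_i,e_i>_0 = 1$ for $i \leq p$ and $=-1$ for $i > p$: for $(p-2,q,0)$ take the span omitting $e_1,e_2$; for $(p,q-2,0)$ omit $e_{p+1},e_{p+2}$; for $(p-1,q-1,0)$ omit $e_1,e_{p+1}$; for $(p-2,q-1,1)$ take the span of $e_3,\dots,e_p$, $e_{p+2},\dots,e_{p+q}$ together with the null vector $e_1+e_{p+1}$ (a codimension-two subspace whose radical is spanned by $e_1+e_{p+1}$); symmetrically for $(p-1,q-2,1)$ use $e_2+e_{p+2}$ type vectors while keeping one extra positive direction; and for $(p-2,q-2,2)$ take the span of $e_3,\dots,e_p,e_{p+3},\dots,e_{p+q}$ together with $e_1+e_{p+1}$ and $e_2+e_{p+2}$, which has a two-dimensional radical. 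In each case one checks the signature directly by exhibiting an orthogonal-type basis adapted to the decomposition into a nondegenerate part plus the radical.

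The main obstacle, though it is more bookkeeping than conceptual, is the codimension-one step: one must argue carefully that no codimension-one subspace can simultaneously lose a positive direction, lose a negative direction, \emph{and} gain nullity (which would give $(p-1,q-1,0)$ with a one-dimensional radical sitting inside, i.e.\ jumping by two in nullity is impossible from a single cut), and more importantly that nullity cannot increase by two in one step. This follows because $\rad W$ for a hyperplane $W = \ker f$ either equals $\rad(\R^{p+q}) \cap W = 0$ (as $\hoge< , >_0$ is nondegenerate) when the functional $f$ is not ``isotropic-aligned,'' or is at most one-dimensional in general: $\rad W = W \cap W^{\perp}$ and $\dim W^{\perp} = 1$ since the form is nondegenerate, so $\dim \rad W \leq 1$. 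Feeding this bound into the iteration gives nullity at most $2$ after two steps, and combined with the conservation $p_{\text{new}} + q_{\text{new}} + r_{\text{new}} = p + q - 2$ (wait — one must track that the total drops by exactly the codimension only when counted as $\dim V$, so $p_V + q_V + r_V = p+q-2$) pins down the list. I would organize the final write-up as: a lemma handling one codimension step, its iteration, and then the six explicit realizations.
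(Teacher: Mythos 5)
Your realization half (the six explicit subspaces) is essentially the paper's own construction, but your upper-bound half takes a genuinely different route. The paper does not iterate a codimension-one analysis: it fixes an $(s,t,u)$-basis of $V$ and invokes its basis-extension result (Proposition~\ref{prop : extension of a given basis with the weakest assumption and the strongest result}) to obtain the inequalities $s+u\le p$ and $t+u\le q$ directly; together with $s+t+u=p+q-2$ these force $0\le u\le 2$ and pin down the six triples by pure arithmetic. Note that the naive bounds $s\le p$, $t\le q$, $u\le 2$ would \emph{not} suffice (they leave $(p-3,q,1)$, $(p,q-3,1)$, etc.\ alive), so some version of the stronger constraint is indispensable; your one-step lemma encodes it geometrically (gaining a null direction costs one positive and one negative direction), while the paper gets it for free from machinery it needs elsewhere. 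Both are valid; yours is more self-contained, the paper's avoids any induction.

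One point needs repair before your argument closes. Your one-step lemma, and its justification ``$\rad W=W\cap W^{\perp}$ and $\dim W^{\perp}=1$ since the form is nondegenerate, so $\dim\rad W\le 1$,'' are stated for a hyperplane of a \emph{nondegenerate} space. In the second application, however, the ambient space is a hyperplane $H$ of $\R^{p+q}$ that may itself be degenerate, with $\sign H=(p-1,q-1,1)$; there the restricted form is not nondegenerate, the orthogonal complement of $W$ taken inside $H$ can have dimension greater than one, and a fourth outcome appears: the nullity can \emph{drop} by one when the cut removes the radical direction, yielding $(p-1,q-1,0)$. You must either extend the lemma to a degenerate ambient space (split on whether $\rad H\subset W$ and pass to $H/\rad H$), or sidestep the iteration for the nullity bound by taking complements in the full space: for $\dim V=p+q-2$ one has $\rad V=V\cap V^{\perp}$ with $\dim V^{\perp}=2$, hence $\dim\rad V\le 2$ directly. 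With that fixed, your enumeration does produce exactly the six listed triples.
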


The proof will be given in Section~\ref{sec4}.

Fix a subspace $V$ of $\R^{p+q}$ with $\dim V \geq 2$. Take an arbitrary one dimensional subspace $W$ of $V$. Then one has 
\begin{align*}
\sign W \in \{(1, 0, 0),\ (0, 1, 0),\ (0, 0, 1)\}.
\end{align*}
According to Proposition~\ref{prop : flag mfd iff signature and radical}, when $\sign W=(0, 0, 1)$, we need to know $\dim (W \cap \rad V)$. Hence we define the new notion 
\begin{align*}
\sign_V (W, \hoge< , >_0):=
\begin{cases}
\sign (W, \hoge< , >_0)&\mbox{if $W \cap \rad (V, \hoge< , >_0)=\{0\}$},\\
(0, 0, 1)_\nul&\mbox{if $W \subset \rad (V, \hoge< , >_0)$}.
\end{cases}
\end{align*}
It is obvious that one has
\begin{align*}
\sign_V W \in \{(1, 0, 0),\ (0, 1, 0),\ (0, 0, 1),\ (0, 0, 1)_\nul\}.
\end{align*}
The next proposition describes all possible $\sign_V W$. 

\begin{Prop}
\label{prop : sign_V W}
Fix a subspace $V$ of $\R^{p+q}$ with $\sign (V, \hoge< , >_0)=(s, t, u)$ and $s, t, u \in \Z_{\geq 0}$. Let $B$ be the set of all possible signatures of one-dimensional subspaces of $V$ with respect to $\hoge< , >_0$, that is,
\begin{align*}
B:=\{\sign_V (W, \hoge< , >_0) \mid W \subset V,\ \dim W=1\}.
\end{align*}
Then one has
\begin{itemize}
	\setlength{\itemsep}{3pt} 
\item $(1, 0, 0) \in B$ if and only if $s \geq 1$,
\item $(0, 1, 0) \in B$ if and only if $t \geq 1$,
\item $(0, 0, 1) \in B$ if and only if $s, t \geq 1$,
\item $(0, 0, 1)_\nul \in B$ if and only if $u \geq 1$.
\end{itemize}
\end{Prop}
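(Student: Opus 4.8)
\textbf{Proof proposal for Proposition~\ref{prop : sign_V W}.}

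The plan is to exploit a Witt-type decomposition of $(V, \hoge< , >_0\mid_{V \times V})$ into an orthogonal direct sum $V = \rad V \oplus V'$, where $V'$ is a nondegenerate complement with $\sign (V', \hoge< , >_0) = (s, t, 0)$ and $\dim \rad V = u$. Then $V'$ further decomposes orthogonally as $V_+ \oplus V_-$ with $\sign V_+ = (s, 0, 0)$ and $\sign V_- = (0, t, 0)$. I would fix such a decomposition once and for all and produce explicit one-dimensional subspaces $W$ realizing each of the four possible values of $\sign_V W$, which establishes the ``if'' directions, and then argue non-existence for the ``only if'' directions by elementary linear algebra on the signature. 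The four bullets are logically independent, so I would treat them one at a time.

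For the ``if'' directions: if $s \geq 1$, pick a unit positive vector $w \in V_+$; then $\hoge<w, w>_0 > 0$ and $w \notin \rad V$ (since $\hoge< , >_0$ is anisotropic on $V_+$ and $V_+ \perp \rad V$), so $\sign_V \s\{w\} = (1, 0, 0)$. The case $t \geq 1$ is identical with a negative vector in $V_-$. If $s, t \geq 1$, pick unit vectors $w_+ \in V_+$ and $w_- \in V_-$ with $\hoge<w_+, w_+>_0 = 1$, $\hoge<w_-, w_->_0 = -1$, and set $w := w_+ + w_-$; then $\hoge<w, w>_0 = 0$, but $w \notin \rad V$ because, e.g., $\hoge<w, w_+>_0 = 1 \neq 0$, so $\sign_V \s\{w\} = (0, 0, 1)$. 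Finally, if $u \geq 1$, any nonzero $w \in \rad V$ gives a one-dimensional subspace contained in $\rad V$, hence $\sign_V \s\{w\} = (0, 0, 1)_\nul$.

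For the ``only if'' directions I would contrapose. If $s = 0$ then $\hoge< , >_0\mid_{V \times V}$ is negative semidefinite, so no vector of $V$ has positive norm, ruling out $(1, 0, 0) \in B$; symmetrically $t = 0$ rules out $(0, 1, 0)$, and $u = 0$ means $\rad V = \{0\}$, so no one-dimensional subspace can lie in $\rad V$, ruling out $(0, 0, 1)_\nul$. The one genuinely substantive implication is: if $(0, 0, 1) \in B$ then $s, t \geq 1$. Suppose $W = \s\{w\} \subset V$ with $\hoge<w, w>_0 = 0$ and $w \notin \rad V$. Write $w = w_0 + w'$ with $w_0 \in \rad V$ and $w' \in V'$; then $\hoge<w, w>_0 = \hoge<w', w'>_0 = 0$ and $w' \neq 0$ (otherwise $w \in \rad V$). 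So $V'$ contains a nonzero isotropic vector, which forces its nondegenerate form to be indefinite, i.e.\ $s \geq 1$ and $t \geq 1$; concretely, decomposing $w' = w'_+ + w'_-$ along $V_+ \oplus V_-$, the identity $\hoge<w'_+, w'_+>_0 = -\hoge<w'_-, w'_->_0$ together with $w' \neq 0$ gives $w'_+ \neq 0$ and $w'_- \neq 0$, whence $V_+$ and $V_-$ are both nontrivial. I do not expect any real obstacle here; the only care needed is bookkeeping the definition of $\sign_V$ versus $\sign$ (i.e.\ keeping straight that $(0, 0, 1)$ demands a genuinely isotropic vector \emph{outside} the radical, while $(0, 0, 1)_\nul$ demands one inside), and making sure the Witt decomposition is invoked correctly when $u > 0$.
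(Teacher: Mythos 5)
Your proposal is correct and takes essentially the same route as the paper: the orthogonal decomposition $V = V_+ \oplus V_- \oplus \rad V$ you invoke is exactly what the paper's $(s,t,u)$-basis of $V$ provides, and in both arguments the four bullets reduce to reading off which values of $\sum a_i^2 - \sum b_j^2$ can occur and whether the radical is nontrivial. You merely spell out the explicit witnesses and the contrapositives in more detail than the paper, which writes the norm in coordinates and declares the verifications easy.
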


Also for this proposition, the proof will be given in Section~\ref{sec4}.

\subsection{The proof of the main theorem}

In this subsection, we prove Theorem~\ref{thm : classification of left-inv pseudo-Riem. of arbitrary signature} by applying Propositions~\ref{prop : flag mfd iff signature and radical}, \ref{prop : signature on V_{p+q-2}}, and \ref{prop : sign_V W}. Let $G:=H_3 \times \R^{n-3}$ with $n \geq 4$ and 
\begin{align*}
\LG:=\LH_3 \oplus \R^{n-3}:=\s\{e_1, \ldots, e_n \mid [e_{n-1}, e_n]=e_1\},
\end{align*}
where $\LH_3=\s\{e_1, e_{n-1}, e_n\}$ is the three dimensional Heisenberg Lie algebra. 

\begin{proof}[Proof of Theorem~\ref{thm : classification of left-inv pseudo-Riem. of arbitrary signature}]
Let $p, q \in \Z_{\geq 1}$ with $p+q \geq 4$. The desired classification is given by the orbits of the action of $\RAutg$ on $\LLM_{(p, q)}(\LG)$. Recall that one has an identification
\begin{align*}
\LLM_{(p, q)}(\LG)=\GL(n, \R)/\OO(p, q)
\end{align*}
as homogeneous spaces, where $n=p+q$. Hence, we can identify the orbit space $\RAutg \backslash \LLM_{(p, q)}(\LG)$ with the double coset space, that is, one has
\begin{align*}
\RAutg \backslash \LLM_{(p, q)}(\LG)=\RAutg \backslash \GL(n, \R)/\OO(p, q).
\end{align*}
On the other hand, from a general theory, there is a one-to-one correspondence 
\begin{align*}
\RAutg \backslash \GL(n, \R)/\OO(p, q)\ \overset{1:1}{\longleftrightarrow}\ \OO(p, q) \backslash \GL(n, \R)/\RAutg.
\end{align*}
Moreover the matrix expression of $\RAutg$ with respect to the basis $\{e_1, \dots, e_n\}$ of $\LG$ coincides with the form of (\ref{the form of parabolic}) (cf.\ \cite{KTT}). By this matrix expression, $\GL(n, \R)/\RAutg$ can be identified with the flag manifold $F_{1, p+q-2}$. From the above arguments, $\RAutg \backslash \LLM_{(p, q)}(\LG)$ corresponds to $\OO(p, q) \backslash F_{1, p+q-2}$. Therefore we have only to classify flags in $F_{1, p+q-2}$ by the action of $\OO(p, q)$. By Proposition~\ref{prop : flag mfd iff signature and radical}, one knows that each $\OO(p, q)$-orbit through $(V_1, V_{p+q-2}) \in F_{1, p+q-2}$ is characterized only by 
\begin{align*}
\sign V_1, \quad \sign V_{p+q-2}, \quad \dim (V_1 \cap \rad V_{p+q-2}).
\end{align*}

In the following arguments, we assume $p \geq q$. Then the condition $p+q \geq 4$ yields that $p \geq 2$. From Proposition~\ref{prop : signature on V_{p+q-2}}, one has
\begin{align*}
\sign V_{p+q-2} \in \left\{
\begin{array}{lll}
	(p-2, q, 0), &(p-1, q-1, 0), &(p, q-2, 0),\\
	(p-2, q-1, 1), &(p-1, q-2, 1), &(p-2, q-2, 2)
\end{array}\right\} \cap (\Z_{\geq 0})^3.
\end{align*} 
We complete Table~\ref{table : The number of equivalence classes} for each $\sign V_{p+q-2}$ one by one. 
Note that
\begin{align*}
p \geq 2, \quad q \geq 1.
\end{align*}
First, let us consider the case of $\sign V_{p+q-2}=(p-2, q, 0)$. In this case, by Proposition~\ref{prop : sign_V W}, 
\begin{itemize}
	\setlength{\itemsep}{7pt} 
\item if $p \geq 3$, then $\sign_{V_{p+q-2}} V_1=(1, 0, 0),\ (0, 1, 0),\ (0, 0, 1)$,
\item if $p=2$, then $\sign_{V_{p+q-2}} V_1=(0, 1, 0)$.
\end{itemize}
We here summarize all possible $\sign_{V_{p+q-2}} V_1$ for the other $\sign V_{p+q-2}$. In the case of $\sign V_{p+q-2}=(p-1, q-1, 0)$,
\begin{itemize}
	\setlength{\itemsep}{7pt} 
\item if $q \geq 2$, then $\sign_{V_{p+q-2}} V_1=(1, 0, 0),\ (0, 1, 0),\ (0, 0, 1)$,
\item if $q=1$, then $\sign_{V_{p+q-2}} V_1=(1, 0, 0)$.
\end{itemize}
In the case of $\sign V_{p+q-2}=(p, q-2, 0)$, we have $q \geq 2$ and
\begin{itemize}
	\setlength{\itemsep}{7pt} 
\item if $q \geq 3$, then $\sign_{V_{p+q-2}} V_1=(1, 0, 0),\ (0, 1, 0),\ (0, 0, 1)$,
\item if $q=2$, then $\sign_{V_{p+q-2}} V_1=(1, 0, 0)$.
\end{itemize}
In the case of $\sign V_{p+q-2}=(p-2, q-1, 1)$,
\begin{itemize}
	\setlength{\itemsep}{7pt} 
\item if $p \geq 3$ and $q \geq 2$, then $\sign_{V_{p+q-2}} V_1=(1, 0, 0),\ (0, 1, 0),\ (0, 0, 1),\ (0, 0, 1)_\nul$,
\item if $p \geq 3$ and $q=1$, then $\sign_{V_{p+q-2}} V_1=(1, 0, 0),\ (0, 0, 1)_\nul$,
\item if $p=q=2$, then $\sign_{V_{p+q-2}} V_1=(0, 1, 0),\ (0, 0, 1)_\nul$.
\end{itemize}
In the case of $\sign V_{p+q-2}=(p-1, q-2, 1)$, we have $q \geq 2$ and
\begin{itemize}
	\setlength{\itemsep}{7pt} 
\item if $p \geq 3$ and $q \geq 3$, then $\sign_{V_{p+q-2}} V_1=(1, 0, 0),\ (0, 1, 0),\ (0, 0, 1),\ (0, 0, 1)_\nul$,
\item if $p \geq 3$ and $q=2$, then $\sign_{V_{p+q-2}} V_1=(1, 0, 0),\ (0, 0, 1)_\nul$,
\item if $p=q=2$, then $\sign_{V_{p+q-2}} V_1=(1, 0, 0),\ (0, 0, 1)_\nul$.
\end{itemize}
In the case of $\sign V_{p+q-2}=(p-2, q-2, 2)$, we have $q \geq 2$ and
\begin{itemize}
	\setlength{\itemsep}{7pt} 
\item if $p \geq 3$ and $q \geq 3$, then $\sign_{V_{p+q-2}} V_1=(1, 0, 0),\ (0, 1, 0),\ (0, 0, 1),\ (0, 0, 1)_\nul$,
\item if $p \geq 3$ and $q=2$, then $\sign_{V_{p+q-2}} V_1=(1, 0, 0),\ (0, 0, 1)_\nul$,
\item if $p=q=2$, then $\sign_{V_{p+q-2}} V_1=(0, 0, 1)_\nul$.
\end{itemize}

Hence one can obtain the pairs of $\sign V_{p+q-2}$ and $\sign_{V_{p+q-2}} V_1$ in Table~\ref{table : The number of equivalence classes}. Only for the case of $p, q \geq 3$, we explicitly describe $21$ pairs of the signatures, and for the other cases we mark each slot in the table with the check mark ``\checkmark" if its corresponding equivalence class appears. At the bottom row, we write the number of equivalence classes.

\begin{table}[h]
\caption{The number of equivalence classes}
	\begin{tabular}{|c||l|l||c||c||c|}\hline
	&\multicolumn{2}{|c||}{$p, q \geq 3$}&$p \geq 3$, $q=2$&$p \geq 3$, $q=1$&$p=q=2$\\ \hline
	&$\sign V_{p+q-2}$&$\sign_{V_{p+q-2}} V_1$&&&\\\hline\hline
	(1)&$(p-2, q, 0)$&$(1, 0, 0)$&\checkmark&\checkmark&\\
	(2)&&$(0, 1, 0)$&\checkmark&\checkmark&\checkmark\\
	(3)&&$(0, 0, 1)$&\checkmark&\checkmark&\\\hline
	(4)&$(p-1, q-1, 0)$&$(1, 0, 0)$&\checkmark&\checkmark&\checkmark\\
	(5)&&$(0, 1, 0)$&\checkmark&&\checkmark\\
	(6)&&$(0, 0, 1)$&\checkmark&&\checkmark\\\hline
	(7)&$(p, q-2, 0)$&$(1, 0, 0)$&\checkmark&&\checkmark\\
	(8)&&$(0, 1, 0)$&&&\\
	(9)&&$(0, 0, 1)$&&&\\\hline
	(10)&$(p-2, q-1, 1)$&$(1, 0, 0)$&\checkmark&\checkmark&\\
	(11)&&$(0, 1, 0)$&\checkmark&&\checkmark\\
	(12)&&$(0, 0, 1)$&\checkmark&&\\
	(13)&&$(0, 0, 1)_\nul$&\checkmark&\checkmark&\checkmark\\\hline
	(14)&$(p-1, q-2, 1)$&$(1, 0, 0)$&\checkmark&&\checkmark\\
	(15)&&$(0, 1, 0)$&&&\\
	(16)&&$(0, 0, 1)$&&&\\
	(17)&&$(0, 0, 1)_\nul$&\checkmark&&\checkmark\\\hline
	(18)&$(p-2, q-2, 2)$&$(1, 0, 0)$&\checkmark&&\\
	(19)&&$(0, 1, 0)$&&&\\
	(20)&&$(0, 0, 1)$&&&\\
	(21)&&$(0, 0, 1)_\nul$&\checkmark&&\checkmark\\\hline\hline
	&\multicolumn{2}{|c||}{$21$}&$15$&$6$&$10$\\ \hline
	\end{tabular}
\label{table : The number of equivalence classes}
\end{table}
This table proves Theorem~\ref{thm : classification of left-inv pseudo-Riem. of arbitrary signature}. 
\end{proof}

For $p, q \in \Z_{\geq 1}$ with $p+q \geq 4$, every $\OO(p, q)$-orbit in $F_{1, p+q-2}$ is characterized by $\sign (V_{p+q-2}, \hoge< , >_0)$ and $\sign_{V_{p+q-2}} (V_1, \hoge< , >_0)$ as in Table~\ref{table : The number of equivalence classes}. We explain what this table represents in terms of inner products on $\LG$. Here we denote the center and the derived ideal of $\LG$ by $Z(\LG)$ and $[\LG, \LG]$, respectively. Then one has
\begin{align*}
Z(\LG)=\s\{e_1, \ldots, e_{p+q-2}\}, \quad [\LG, \LG]=\s\{e_1\}.
\end{align*}
In terms of $\LG$, Table~\ref{table : The number of equivalence classes} represents the pairs of signatures of $\hoge< , > \in \LLM_{(p, q)}(\LG)$ restricted to $Z(\LG)$ and $[\LG, \LG]$, that is, every $\RAutg$-orbit in $\LLM_{(p, q)}(\LG)$ is characterized by 
\begin{align*}
\sign (Z(\LG), \hoge< , >), \quad \sign_{Z(\LG)} ([\LG, \LG], \hoge< , >)
\end{align*}
as in Table~\ref{table : The number of equivalence classes}.

\begin{Rem}
\label{rem : flat metrics}
For left-invariant Lorentzian metrics on $G$, the degenerations of $\RAutg$-orbits have been studied in \cite{KT}. For any different orbits $\Oo_1$ and $\Oo_2$, recall that $\Oo_1$ is said to {\it degenerate} to $\Oo_2$ if $\Oo_2 \subset \overline{\Oo_1}$ holds, where $\overline{\Oo_1}$ is the closure of $\Oo_1$. In the Lorentzian case, there exists only one closed $\RAutg$-orbit, which corresponds to (13) in Table~\ref{table : The number of equivalence classes} and is characterized as the unique equivalence class of flat metrics up to scaling and automorphisms. Furthermore, inner products in this closed orbit are degenerate on $Z(\LG)$ and $[\LG, \LG]$ as (13) in Table~\ref{table : The number of equivalence classes}. The author has verified that similar phenomena occur also in the non-Lorentzian cases, that is, 
\begin{itemize}
\item the $\RAutg$-orbit corresponding to (21) is the unique closed orbit,
\item the metric corresponding to (21) is flat,
\item inner products in this closed orbit are degenerate on $Z(\LG)$ and $[\LG, \LG]$ as (21) in Table~\ref{table : The number of equivalence classes}.
\end{itemize}
Note that a closed orbit always exists. It would be a natural problem to consider whether the above three correspondences hold for any Lie group or not. In fact, some papers study the relations between the curvature properties and the signatures of the restrictions (\cite{BT, GB}).
\end{Rem}

\begin{Rem}
\label{rem : up to isometry}
For a fixed signature, we here mention that the left-invariant pseudo-Riemannian metrics on $G$ corresponding to $(13)$, $(17)$, $(20)$ and $(21)$ in Table~\ref{table : The number of equivalence classes} are all isometric to each other. The curvatures of the above metrics can be calculated directly. According to it, the left-invariant pseudo-Riemannian metrics on $G$ corresponding to $(13)$, $(17)$, $(20)$ and $(21)$ in Table~\ref{table : The number of equivalence classes} are flat metrics. In \cite{G}, it is proved that every left-invariant pseudo-Riemannian metric on a two-step nilpotent Lie group is geodesically complete. Hence $G$ endowed with one of the above four flat metrics is a simply-connected space form, where a \textit{space form} is a complete and connected pseudo-Riemannian manifold with constant curvature. It is well-known that simply-connected space forms are isometric if and only if they have the same dimension, signature and constant curvature (cf.\ \cite{O'Neill}). Therefore, our claim holds.

Recall that the metrics corresponding to $(17)$, $(20)$ and $(21)$ occur only in the non-Lorentzian cases. Thus in the non-Lorentzian cases, there exist left-invariant pseudo-Riemannian metrics on $G$ which are distinct up to automorphisms but isometric.
\end{Rem}

\section{The proof of Propositions~\ref{prop : flag mfd iff signature and radical}, \ref{prop : signature on V_{p+q-2}} and \ref{prop : sign_V W}}
\label{sec4}

In this section, we prove the propositions which we used for proving the main theorem in Section~\ref{sec3}. Throughout this section, let $V$ be a real vector space of finite dimension. We denote by $\hoge< , >$ an inner product on $V$, which is not necessarily nondegenerate.

\subsection{Auxiliary lemmas and propositions on vector spaces}
\label{sub1}

In this subsection, we show some auxiliary lemmas and propositions, which we use in Subsections~\ref{sub2} and \ref{sub3}.  

First of all, we define a particular basis for a given vector space, which is an analogue to an orthonormal basis in the positive definite case. In order to do that, we introduce the next notation $\varepsilon_i$ given by
\begin{align*}
\varepsilon_i:=\left\{
			\begin{array}{rl}
				1&(i \in \{1, \ldots, p\}),\\
				-1&(i \in \{s+1, \ldots, p+q\}),\\
				0&(i \in \{s+t+1, \ldots, p+q+r\}),
			\end{array}
			\right.
\end{align*}
where $p, q, r \in \Z_{\geq 0}$. 

\begin{Def}
\label{def : (p, q, r)-basis}
A set $\{v_1, \ldots, v_{p+q+r}\}$ of linearly independent vectors of $V$ is called a \textit{$(p, q, r)$-system} with respect to $\hoge< , >$ if it satisfies
\begin{align*}
\hoge<v_i, v_j>=\varepsilon_i \delta_{ij} \quad (\forall i, j \in \{1, \ldots, p+q+r\}),
\end{align*}
where $\delta_{ij}$ is the Kronecker's delta. In addition, if $\{v_1, \ldots, v_{p+q+r}\}$ is a basis of $V$, then it is called a \textit{$(p, q, r)$-basis} of $V$. 
\end{Def}

A vector space of finite dimension with a positive definite inner product has an orthonormal basis. A similar statement holds for the nondegenerate cases (cf.\ \cite{DB}). More generally, there exists a $(p, q, r)$-basis of $V$ if $\sign V=(p, q, r)$.

\begin{Prop}
\label{prop : existence of (p, q, r)-basis}
Let $(p, q, r):=\sign (V, \hoge< , >)$. Then $V$ has a $(p, q, r)$-basis with respect to $\hoge< , >$.
\end{Prop}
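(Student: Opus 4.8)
The plan is to prove the existence of a $(p,q,r)$-basis by an induction on $n = \dim V$, following the classical Gram--Schmidt-type argument adapted to possibly degenerate inner products. The base case $n=0$ is vacuous. For the inductive step, I would first split off the radical: write $\rad(V,\hoge<,>) = \rad V$, which has some dimension, say $r$, since by definition the signature of $\hoge<,>$ restricted to $\rad V$ is $(0,0,r)$ and $r$ coincides with the number of zero eigenvalues of the Gram matrix $A$ (this is the standard fact that $\dim \rad V$ equals the nullity of $A$). Choose any basis $w_1,\dots,w_r$ of $\rad V$; these will serve as the last $r$ vectors of the $(p,q,r)$-system, since they pair to zero with everything. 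Then pick any complementary subspace $V'$ with $V = V' \oplus \rad V$; the restriction of $\hoge<,>$ to $V'$ is nondegenerate of signature $(p,q,0)$.

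So the problem reduces to the nondegenerate case: a finite-dimensional space $V'$ with a nondegenerate inner product of signature $(p,q)$ has a $(p,q,0)$-basis. Here I would argue by induction on $\dim V' = p+q$. Since $\hoge<,>$ is nondegenerate (and $V' \neq 0$), there exists $v \in V'$ with $\hoge<v,v> \neq 0$ — otherwise the polarization identity $\hoge<x,y> = \tfrac12(\hoge<x+y,x+y> - \hoge<x,x> - \hoge<y,y>)$ would force $\hoge<,>\equiv 0$. Rescale $v$ so that $\hoge<v,v> = \pm 1$; set $v_1 := v$. Then $V' = \R v_1 \oplus (\R v_1)^{\perp}$: indeed for any $x$, the vector $x - \frac{\hoge<x,v_1>}{\hoge<v_1,v_1>} v_1$ lies in $(\R v_1)^\perp$, and the intersection $\R v_1 \cap (\R v_1)^\perp$ is zero because $\hoge<v_1,v_1>\neq 0$. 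The restriction of $\hoge<,>$ to $(\R v_1)^\perp$ is again nondegenerate (if $u \in (\R v_1)^\perp$ were orthogonal to all of $(\R v_1)^\perp$, then being also orthogonal to $v_1$ it would be in $\rad V' = 0$), so by the inductive hypothesis it has a $(p',q',0)$-basis $v_2,\dots,v_{p+q}$ with $p'+q' = p+q-1$. Concatenating $v_1$ with this basis and reordering so that the vectors of positive square come first gives a $(p,q,0)$-basis of $V'$; the counts $(p,q)$ come out right because the signature is additive over the orthogonal decomposition $V' = \R v_1 \oplus (\R v_1)^\perp$ (Gram matrix is block diagonal) and hence well-defined independent of the construction.

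Finally I would reassemble: the concatenation $v_1,\dots,v_{p+q},w_1,\dots,w_r$ is linearly independent (the first $p+q$ span the complement $V'$, the last $r$ span $\rad V$, and $V = V' \oplus \rad V$), it is a basis of $V$ since the total count is $\dim V$, and it satisfies $\hoge<v_i,v_j> = \varepsilon_i\delta_{ij}$ by construction — the cross terms between the $V'$-part and the radical part vanish because radical vectors annihilate everything. Hence it is a $(p,q,r)$-basis.

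The routine parts are the direct-sum and nondegeneracy bookkeeping; the only point that needs a little care is the \emph{well-definedness} of the triple $(p,q,r)$ — i.e.\ that the numbers produced by this inductive construction really are the numbers of positive, negative and zero eigenvalues of the Gram matrix of \emph{any} basis. This is Sylvester's law of inertia together with the rank–nullity identification of $r$ with the multiplicity of the eigenvalue $0$; since the statement of the proposition already fixes $(p,q,r) := \sign(V,\hoge<,>)$ via the Gram matrix, I would either invoke Sylvester's law as a known fact or note that the construction exhibits an explicit congruence of $A$ to $\mathrm{diag}(I_p,-I_q,0_r)$, which is all that is needed. That bookkeeping step is the main thing to be careful about; everything else is a direct adaptation of Gram--Schmidt.
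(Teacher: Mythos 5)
Your proof is correct, but it takes a different (and more self-contained) route than the paper. The paper's proof is a one-liner: identify $V\cong\R^{p+q+r}$, take the Gram matrix $A$ of $\langle\,,\,\rangle$ in the standard basis, invoke Sylvester's law of inertia to get $g\in\GL(p+q+r,\R)$ with $\trans gAg=I_{p,q,r}$, and set $v_i:=ge_i$. You instead carry out the diagonalization by hand: split off $\rad V$ first, reduce to the nondegenerate case, and run a Gram--Schmidt induction that peels off anisotropic vectors one at a time (using polarization to find a vector of nonzero square). In effect you reprove the \emph{existence} half of Sylvester's normal form, while both arguments still need the \emph{uniqueness} half (invariance of inertia under congruence, plus the identification of $\dim\rad V$ with the nullity of $A$) to match the counts produced by the construction with the triple $(p,q,r)$ defined via eigenvalues of the Gram matrix --- you flag this correctly at the end, though note that merely ``exhibiting an explicit congruence of $A$ to $\mathrm{diag}(I_p,-I_q,0_r)$'' is not by itself enough without that invariance. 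What each approach buys: the paper's version is shorter and delegates everything to a standard theorem; yours is constructive, makes explicit where degeneracy enters (the radical is handled first and contributes exactly the last $r$ vectors of the system), and is closer in spirit to the later, harder basis-extension arguments of Propositions~\ref{prop : extension of basis of sign=(0, 0, k)} and \ref{prop : extension of a given basis with the weakest assumption and the strongest result}, where one cannot simply cite a normal-form theorem.
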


\begin{proof}
We identify $V \cong \R^{p+q+r}$ as vector spaces. Let $\{e_1, \ldots, e_{p+q+r}\}$ be the standard basis of $V$, and we put
\begin{align*}
I_{p, q, r}:=\left(
\begin{array}{ccc}
I_p&&\\
&-I_q&\\
&&O_r
\end{array}
\right),
\end{align*}
where $O_r$ is the zero matrix of order $r$. Let $A$ be the Gram matrix of $\hoge< , >$ with respect to $\{e_1, \ldots, e_{p+q+r}\}$. Then by Sylvester's law of inertia, there exists $g \in \GL(p+q+r, \R)$ such that $\trans gAg=I_{p, q, r}$. Here we put
\begin{align*}
v_i:=ge_i \quad (i \in \{1, \ldots, p+q+r\}).
\end{align*}
One obtains a $(p, q, r)$-basis $\{v_1, \ldots, v_{p+q+r}\}$ of $V$ with respect to $\hoge< , >$.
\end{proof}

Next we consider the decomposition of a light-like vector $v \notin \rad V$ into space-like and time-like vectors. Recall that a vector $v \in V$ is
\begin{itemize}
\item \textit{space-like} if $\hoge<v, v> >0$ or $v=0$,
\item \textit{time-like} if $\hoge<v, v> <0$,
\item \textit{light-like} if $\hoge<v, v>=0$ and $v \neq 0$.
\end{itemize}
Let $U$ be a nondegenerate subspace of $V$ with respect to $\hoge< , >$, and define the \textit{light-cone} of $U$ by
\begin{align*}
C_0(U, \hoge< , >):=\{u \in U \mid \hoge<u, u>=0\} \setminus \{0\}.
\end{align*}
Moreover we put
\begin{align*}
\OO(U, \hoge< , >):=\{f: U \to U \mid \mbox{$f$ is a linear isometry with respect to $\hoge< , >$}\}.
\end{align*}
Then it is well-known that $C_0(U, \hoge< , >)$ is an $\OO(U, \hoge< , >)$-homogeneous space.

\begin{Lem}
\label{prop : decompose to space and time}
Let $v$ be a light-like vector in $V$ with $v \notin \rad V$. Then there exists a $(1, 1, 0)$-system $\{v^+, v^-\}$ of $V$ such that $v=v^+ +v^-$.
\end{Lem}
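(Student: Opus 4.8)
The statement asserts that a light-like vector $v \in V$ with $v \notin \rad V$ can be written as $v = v^+ + v^-$ where $\{v^+, v^-\}$ is a $(1,1,0)$-system, i.e.\ $\hoge<v^+, v^+> = 1$, $\hoge<v^-, v^-> = -1$, and $\hoge<v^+, v^-> = 0$. The plan is to first produce a suitable two-dimensional nondegenerate subspace $U$ containing $v$ and then exploit the homogeneity of the light cone inside it.

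\textbf{Step 1: find a partner vector.} Since $v \notin \rad V$, there exists $w \in V$ with $\hoge<v, w> \neq 0$; after rescaling $w$ we may assume $\hoge<v, w> = 1$. Consider $U := \s\{v, w\}$. Its Gram matrix with respect to $\{v, w\}$ is $\begin{pmatrix} 0 & 1 \\ 1 & \hoge<w,w> \end{pmatrix}$, which has determinant $-1 \neq 0$, so $U$ is a two-dimensional nondegenerate subspace with signature $(1,1,0)$ (the eigenvalues of the Gram matrix have product $-1$, hence one is positive and one is negative).

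\textbf{Step 2: normalize using the light cone of $U$.} By Proposition~\ref{prop : existence of (p, q, r)-basis} applied to $U$, there is a $(1,1,0)$-basis $\{a, b\}$ of $U$, so $\hoge<a,a> = 1$, $\hoge<b,b> = -1$, $\hoge<a,b> = 0$. The vector $v$ is a nonzero light-like vector of $U$, hence $v \in C_0(U, \hoge<\,,\,>)$. Now $a + b$ is also light-like in $U$ and nonzero, so $a + b \in C_0(U, \hoge<\,,\,>)$ as well. Since $C_0(U, \hoge<\,,\,>)$ is an $\OO(U, \hoge<\,,\,>)$-homogeneous space (as recalled just before the lemma), there exists a linear isometry $f : U \to U$ with $f(a+b) = v$. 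Then set $v^+ := f(a)$ and $v^- := f(b)$. Because $f$ is an isometry, $\{v^+, v^-\}$ is again a $(1,1,0)$-system, and $v^+ + v^- = f(a+b) = v$. These are linearly independent (they are the image under an invertible map of the linearly independent pair $\{a,b\}$), so $\{v^+, v^-\}$ is a genuine $(1,1,0)$-system in $V$, completing the proof.

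\textbf{Main obstacle.} There is no serious obstacle; the only point requiring a little care is verifying that $U$ is nondegenerate (Step 1), which is why one must rescale $w$ so that $\hoge<v,w> = 1$ before computing the determinant of the Gram matrix. An alternative to invoking the homogeneity of the light cone in Step 2 would be to solve directly: writing $v = \alpha a + \beta b$ with $\alpha^2 = \beta^2$ (from light-likeness) and $\alpha, \beta \neq 0$, one checks that $v^+ := \tfrac{1}{2}(v + v')$ and $v^- := \tfrac{1}{2}(v - v')$ work for an appropriate light-like $v' \in U$ not proportional to $v$; but the homogeneity argument is cleaner and uses exactly the fact the paper has set up immediately before the statement.
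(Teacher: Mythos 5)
Your proof is correct and takes essentially the same route as the paper's: both place $v$ inside a nondegenerate subspace $U$ and invoke the transitivity of $\OO(U, \hoge< , >)$ on the light cone $C_0(U, \hoge< , >)$ to carry a standard null vector $a+b$ onto $v$, then set $v^{\pm}$ to be the images of $a$ and $b$. The only (harmless) difference is your choice of $U$: you take the two-dimensional hyperbolic plane $\s\{v, w\}$ with $\hoge<v, w>=1$, verified nondegenerate via the Gram determinant, whereas the paper takes $U$ to be a complement of $\rad V$ containing $v$.
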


\begin{proof}
Since $v \notin \rad V$, there exists a subspace $U$ of $V$ such that
\begin{align*}
V=U \oplus \rad V, \quad v \in U.
\end{align*}
Then there exist $p, q \in \Z_{\geq 1}$ such that $\sign U=(p, q)$ with respect to $\hoge< , >$, since $v \in U$ is light-like. Hence $U$ contains $e^+$ and $e^-$ such that 
\begin{align*}
\hoge<e^+, e^+>=1, \quad \hoge<e^-, e^->=-1, \quad \hoge<e^+, e^->=0.
\end{align*}
Thus one has $e^++e^- \in C_0(U, \hoge< , >)$. Since $C_0(U, \hoge< , >)$ is an $\OO(U, \hoge< , >)$-homogeneous space, there exists $f \in \OO(U, \hoge< , >)$ such that
\begin{align*}
v=f(e^++e^-)=f(e^+)+f(e^-).
\end{align*}
By putting $v^+:=f(e^+)$, $v^-:=f(e^-)$, we complete the proof.
\end{proof}

Next we consider an expansion of a given $(0, 0, k)$-system. Note that, for a subspace $W$ of $V$, one has $V=W \oplus W^\perp$ if $\hoge< , >$ is nondegenerate on $W$. Moreover if $\sign V=(p, q, r)$ and $\sign W=(s, t, 0)$, then we have $\sign W^\perp=(p-s, q-t, r)$.

\begin{Prop}
\label{prop : extension of basis of sign=(0, 0, k)}
Let $(p, q, 0):=\sign (V, \hoge< , >)$, and $\{w_1, \ldots, w_k\}$ be its $(0, 0, k)$-system with $k \in \Z_{\geq 1}$. Then there exists a $(p, q, 0)$-basis $\{x_1, \ldots, x_p, y_1, \ldots, y_q\}$ of $V$ such that 
\begin{align*}
w_i=x_i+y_i \quad (i \in \{1, \ldots, k\}).
\end{align*}
\end{Prop}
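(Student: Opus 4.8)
The plan is to proceed by induction on $k$, the size of the given $(0,0,k)$-system $\{w_1,\ldots,w_k\}$, with the base case $k=1$ handled directly by Lemma~\ref{prop : decompose to space and time}. Since $\sign(V,\hoge<,>)=(p,q,0)$ is nondegenerate, no nonzero vector lies in $\rad V=\{0\}$, so each $w_i$ (being light-like) satisfies $w_i\notin\rad V$; thus Lemma~\ref{prop : decompose to space and time} applies to produce, for $k=1$, a $(1,1,0)$-system $\{x_1,y_1\}$ with $w_1=x_1+y_1$. This $(1,1,0)$-system spans a nondegenerate plane $U_1$ of signature $(1,1)$, and one then extends $\{x_1,y_1\}$ to a $(p,q,0)$-basis of $V$ by choosing a $(p-1,q-1,0)$-basis of $U_1^\perp$ (which exists by Proposition~\ref{prop : existence of (p, q, r)-basis}, since $\sign U_1^\perp=(p-1,q-1,0)$). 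Relabelling, this gives the desired $(p,q,0)$-basis $\{x_1,\ldots,x_p,y_1,\ldots,y_q\}$ with $w_1=x_1+y_1$.

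For the inductive step, suppose the statement holds for systems of size $k-1$, and let $\{w_1,\ldots,w_k\}$ be a $(0,0,k)$-system of $V$. First I would apply the induction hypothesis to $\{w_1,\ldots,w_{k-1}\}$ to obtain a $(p,q,0)$-basis $\{x_1,\ldots,x_p,y_1,\ldots,y_q\}$ of $V$ with $w_i=x_i+y_i$ for $i\in\{1,\ldots,k-1\}$. The remaining task is to adjust this basis — keeping $x_i,y_i$ for $i<k$ fixed — so that the $k$-th pair decomposes $w_k$. The key observation is that $w_k$ lies in the orthogonal complement $U:=(\s\{x_i,y_i:i<k\})^\perp$: indeed, by orthonormality of the basis, $\hoge<w_k,x_i>=\hoge<w_k,x_i>$ is computed from the expansion of $w_k$ in the basis, and $\hoge<w_k,x_i>^2-\hoge<w_k,y_i>^2$ relates to $\hoge<w_k,w_i>=0=\hoge<w_k,w_k>$; one needs to check carefully that in fact $w_k$ is orthogonal to all $x_i,y_i$ with $i<k$. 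Here is the cleanest route: since $\hoge<w_k,w_i>=0$ and $\hoge<w_i,w_i>=0$, and the $w_i$ span a totally isotropic subspace, standard facts about totally isotropic subspaces of nondegenerate spaces (one can realize $\s\{w_1,\ldots,w_k\}$ inside a hyperbolic subspace of signature $(k,k)$) show one may choose the earlier basis vectors entirely within a complement orthogonal to $w_k$. I would make this precise by instead inducting with a stronger hypothesis, or by first passing to a hyperbolic decomposition $V=\mathbb{H}_1\perp\cdots\perp\mathbb{H}_k\perp V'$ adapted to the whole isotropic system.

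Given that reduction, $w_k\in U$ with $\sign U=(p-k+1,q-k+1,0)$ and $w_k$ light-like, hence $w_k\notin\rad U=\{0\}$, so Lemma~\ref{prop : decompose to space and time} applied inside $U$ yields a $(1,1,0)$-system $\{x_k,y_k\}$ of $U$ with $w_k=x_k+y_k$. Then $\{x_1,\ldots,x_k,y_1,\ldots,y_k\}$ together with a $(p-k,q-k,0)$-basis of $(\s\{x_i,y_i:i\le k\})^\perp$ (again provided by Proposition~\ref{prop : existence of (p, q, r)-basis}) forms the desired $(p,q,0)$-basis, completing the induction.

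The main obstacle is the verification that $w_k$ is orthogonal to the span of the first $k-1$ pairs $\{x_i,y_i\}$ — i.e.\ that the induction can be run while keeping previously-constructed vectors fixed. The naive induction does not obviously give this, because the $(p,q,0)$-basis produced at stage $k-1$ is not canonical. The fix is to set up the induction so that at each stage one works inside a \emph{hyperbolic} subspace spanned by $\{w_1,\ldots,w_k\}$ and suitably chosen dual isotropic vectors: a totally isotropic $k$-dimensional subspace $L=\s\{w_1,\ldots,w_k\}$ of a nondegenerate space extends to a hyperbolic subspace $H=L\oplus L^*$ of signature $(k,k)$, with $V=H\perp H^\perp$; inside each hyperbolic plane $\s\{w_i,w_i^*\}$ one writes $w_i=x_i+y_i$ explicitly (e.g.\ $x_i=\tfrac12(w_i+w_i^*)$ up to normalization), and these planes are mutually orthogonal by construction, so all cross-terms vanish automatically. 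With that structural input the remaining steps are the routine applications of Proposition~\ref{prop : existence of (p, q, r)-basis} and bookkeeping of signatures noted in the paragraph preceding the statement.
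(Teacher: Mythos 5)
Your final argument (the hyperbolic--decomposition fix) is correct, but it takes a genuinely different route from the paper's. You correctly diagnose that the naive induction fails because the $(p,q,0)$-basis produced at stage $k-1$ need not be orthogonal to $w_k$, and you repair this by extending the totally isotropic subspace $L=\s\{w_1,\ldots,w_k\}$ to a hyperbolic subspace $H=L\oplus L^*$ with mutually orthogonal hyperbolic planes $\s\{w_i,w_i^*\}$, inside which $x_i=\tfrac12 w_i+w_i^*$ and $y_i=\tfrac12 w_i-w_i^*$ give the required $(k,k,0)$-system; the completion over $H^\perp$ via Proposition~\ref{prop : existence of (p, q, r)-basis} then matches the paper's final step. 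The paper avoids the Witt-type extension altogether: for each $i$ it works inside $W_i:=\s\{w_1,\ldots,w_{i-1},w_{i+1},\ldots,w_k\}^\perp$, checks $w_i\in W_i\setminus\rad W_i$ (using linear independence of the $w_j$ and nondegeneracy of $V$), and applies Lemma~\ref{prop : decompose to space and time} inside $W_i$ to get $\{x_i,y_i\}\subset W_i$ with $w_i=x_i+y_i$; passing successively to the orthogonal complements of the planes already constructed makes the pairs mutually orthogonal. The trade-off is that your route gives explicit formulas but rests on the existence of a dual isotropic system ($\hoge<w_i,w_j^*>=\delta_{ij}$ with $L^*$ totally isotropic and $\hoge<w_i^*,w_j^*>=0$), a standard fact that you assert without proof and whose derivation is itself an induction of essentially the same length as the paper's argument, whereas the paper's route uses only tools already established in the text (the homogeneity of the light cone and the signature bookkeeping for orthogonal complements). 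If you keep your version, you should either prove the hyperbolic extension or cite it precisely.
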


\begin{proof}
We put
\begin{align*}
W_i:=\s\{w_1, \ldots, w_{i-1}, w_{i+1}, \ldots, w_k\}^\perp \quad (i \in \{1, \ldots, k\}).
\end{align*}
First of all we prove that, for any $i \in \{1, \ldots, k\}$, there exists a $(1, 1, 0)$-system $\{x_i, y_i\}$ of $V$ such that 
\begin{align}
\label{condition : (1, 1)-p.o.s and decomposition}
\{x_i, y_i\} \subset W_i, \quad w_i=x_i+y_i.
\end{align}
Take an arbitrary $i \in \{1, \ldots, k\}$. Then one has
\begin{align*}
w_i \in {W_i}.
\end{align*}
Since $V$ is nondegenerate, we have
\begin{align*}
\s\{w_1, \ldots, w_{i-1}, w_{i+1}, \ldots, w_k\}=(\s\{w_1, \ldots, w_{i-1}, w_{i+1}, \ldots, w_k\}^\perp)^\perp={W_i}^\perp.
\end{align*}
If $w_i \in \rad W_i$, then 
\begin{align*}
w_i \in {W_i}^\perp=\s\{w_1, \ldots, w_{i-1}, w_{i+1}, \ldots, w_k\},
\end{align*}
however, this is a contradiction since $w_1, \ldots, w_k$ are linearly independent. Hence $w_i \in W_i \setminus \rad W_i$, and from Lemma~\ref{prop : decompose to space and time}, the assertion (\ref{condition : (1, 1)-p.o.s and decomposition}) holds.

Since $\hoge< , >$ is nondegenerate on $V_i:=\s\{x_i, y_i\}$, one has $V=V_i \oplus {V_i}^\perp$. Note that ${V_i}^\perp$ is nondegenerate. Then we have
\begin{align*}
\s\{w_1, \ldots, w_{i-1}, w_{i+1}, \ldots, w_k\} \subset {V_i}^\perp,
\end{align*}
hence we can repeat the same procedure as the argument of (\ref{condition : (1, 1)-p.o.s and decomposition}). Thus there exists a $(k, k, 0)$-system $\{x_1, \ldots, x_k, y_1, \ldots, y_k\}$ of $V$. Therefore we put
\begin{align*}
\widetilde{W}:=\s\{x_1, \ldots, x_k, y_1, \ldots, y_k\},
\end{align*}
and one has $V=\widetilde{W} \oplus {\widetilde{W}}^\perp$. Since $V$ and $\widetilde{W}$ are nondegenerate, so is ${\widetilde{W}}^\perp$, and its signature is given by
\begin{align*}
\sign {\widetilde{W}}^\perp=(p-k, q-k, 0).
\end{align*}
Thus by Proposition~\ref{prop : existence of (p, q, r)-basis}, there exists a $(p-k, q-k, 0)$-basis 
\begin{align*}
\{x_{k+1}, \ldots, x_p, y_{k+1}, \ldots, y_q\}
\end{align*}
of ${\widetilde{W}}^\perp$. Hence $V$ has the desired $(p, q, 0)$-basis, which completes the proof.
\end{proof}

By Proposition~\ref{prop : extension of basis of sign=(0, 0, k)}, one can construct a $(p, q, r)$-basis of $V$ from a given $(s, t, u)$-basis of its subspace.

\begin{Prop}
\label{prop : extension of a given basis with the weakest assumption and the strongest result}
Let $(p, q, r):=\sign (V, \hoge< , >)$ and $W$ be a subspace of $V$ such that
\begin{align*}
\sign (W, \hoge< , >)=(s, t, u), \quad \dim(W \cap \rad (V, \hoge< , >))=k.
\end{align*}
Fix an $(s, t, u)$-basis 
\begin{align*}
\{x_1, \ldots, x_s, y_1, \ldots, y_t, z_1, \ldots, z_u\} \quad (z_{u-k+1}, \ldots, z_u \in \rad V)
\end{align*}
of $W$. Then $V$ has a $(p, q, r)$-basis 
\begin{align*}
\{\alpha_1, \ldots, \alpha_p, \beta_1, \ldots, \beta_q, \gamma_1, \ldots, \gamma_r\}
\end{align*}
such that  
\begin{align*}
&x_i=\alpha_i \quad (i \in \{1, \ldots, s\}),\\
&y_i=\beta_i \quad (i \in \{1, \ldots, t\}),\\
&z_i=\alpha_{s+i}+\beta_{t+i} \quad (i \in \{1, \ldots, u-k\}),\\
&z_{u-k+i}=\gamma_i \quad (i \in \{1, \ldots, k\}).
\end{align*}
\end{Prop}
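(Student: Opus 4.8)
The statement asserts that an $(s,t,u)$-basis of a subspace $W$ adapted to $\rad V$ (meaning its last $k$ null vectors actually lie in $\rad V$) can be extended to a $(p,q,r)$-basis of $V$ respecting the prescribed decompositions. The strategy is to peel $V$ apart into three mutually orthogonal (or orthogonal-up-to-the-radical) pieces, handle each with the machinery already built, and reassemble. First I would isolate the nondegenerate part of $W$: set $W':=\s\{x_1,\dots,x_s,y_1,\dots,y_t\}$, which is nondegenerate with $\sign W'=(s,t,0)$, so $V=W'\oplus W'^\perp$ with $W'^\perp$ having $\sign W'^\perp=(p-s,q-t,r)$. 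Taking $\alpha_i:=x_i$ for $i\le s$ and $\beta_i:=y_i$ for $i\le t$ disposes of the first two families; everything else must come from $W'^\perp$.

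Next I would locate the remaining null vectors $z_1,\dots,z_u$ inside $W'^\perp$ (they are orthogonal to all the $x_i,y_j$ by construction of an $(s,t,u)$-system, hence lie in $W'^\perp$) and split them according to whether they sit in $\rad V$. The vectors $z_{u-k+1},\dots,z_u$ lie in $\rad V\subset W'^\perp$; since $\rad V\cap W'=\{0\}$, they form part of $\rad(W'^\perp)$ as well (indeed $\rad(W'^\perp)=\rad V$ because $W'$ is nondegenerate). I set $\gamma_i:=z_{u-k+i}$ for $i\in\{1,\dots,k\}$. The vectors $z_1,\dots,z_{u-k}$, on the other hand, are a $(0,0,u-k)$-system inside $W'^\perp$ that does \emph{not} meet $\rad(W'^\perp)$: if some nontrivial combination lay in $\rad(W'^\perp)=\rad V$, it would be a vector of $W$ lying in $\rad V$ but independent from $z_{u-k+1},\dots,z_u$, contradicting $\dim(W\cap\rad V)=k$. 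Here I must be a little careful — the clean way is to pass to a complement $U$ of $\rad(W'^\perp)$ in $W'^\perp$ with $z_1,\dots,z_{u-k}\in U$; since $z_1,\dots,z_{u-k}$ are null, independent, and avoid the radical, this is possible, and $U$ is nondegenerate with $\sign U=(p-s,q-t,0)$.

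Now I would invoke Proposition~\ref{prop : extension of basis of sign=(0, 0, k)} applied to the nondegenerate space $U$ with its $(0,0,u-k)$-system $\{z_1,\dots,z_{u-k}\}$: it produces a $(p-s,q-t,0)$-basis $\{\alpha_{s+1},\dots,\alpha_p,\beta_{t+1},\dots,\beta_q\}$ of $U$ with $z_i=\alpha_{s+i}+\beta_{t+i}$ for $i\in\{1,\dots,u-k\}$. Finally, choosing any basis $\{\gamma_{1},\dots,\gamma_r\}$ of $\rad(W'^\perp)=\rad V$ that extends $\{\gamma_1,\dots,\gamma_k\}$ — possible since $z_{u-k+1},\dots,z_u$ are independent vectors of the $r$-dimensional space $\rad V$ — and assembling $\{\alpha_1,\dots,\alpha_p,\beta_1,\dots,\beta_q,\gamma_1,\dots,\gamma_r\}$, one checks directly that this is a $(p,q,r)$-basis of $V$ (the $\alpha$'s and $\beta$'s from $W'$ and from $U$ are mutually orthogonal because $W'\perp W'^\perp$ and $U\perp\rad$; the $\gamma$'s pair trivially with everything) satisfying all four required identities.

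\textbf{Main obstacle.} The only genuinely delicate point is the claim that $z_1,\dots,z_{u-k}$ can be placed inside a nondegenerate complement $U$ of $\rad(W'^\perp)$ — i.e., that the $(0,0,u-k)$-system survives in a nondegenerate subspace of the right signature. This rests on the two facts that $\rad(W'^\perp)=\rad V$ (immediate from nondegeneracy of $W'$) and that no nontrivial combination of $z_1,\dots,z_{u-k}$ lies in $\rad V$ (forced by $\dim(W\cap\rad V)=k$ together with linear independence of all the $z_i$). Once this bookkeeping about the radical is set up correctly, the rest is an application of Propositions~\ref{prop : existence of (p, q, r)-basis} and \ref{prop : extension of basis of sign=(0, 0, k)} plus a routine orthogonality check.
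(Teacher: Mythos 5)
Your proposal is correct and follows essentially the same route as the paper's proof: both decompose $V$ as $W^{\pm}\oplus(\text{nondegenerate complement})\oplus\rad V$, set $\alpha_i:=x_i$, $\beta_i:=y_i$, apply Proposition~\ref{prop : extension of basis of sign=(0, 0, k)} to the $(0,0,u-k)$-system $\{z_1,\dots,z_{u-k}\}$ in the nondegenerate middle factor, and extend $\{z_{u-k+1},\dots,z_u\}$ to a basis of $\rad V$. The only (immaterial) difference is the order of the two splittings — the paper first picks a complement $U$ of $\rad V$ containing $W^{\pm}\oplus\s\{z_1,\dots,z_{u-k}\}$ and then splits off $W^{\pm}$ orthogonally inside $U$, whereas you first split off $W'$ orthogonally and then choose a complement of the radical inside $W'^{\perp}$.
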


\begin{proof}
First of all, we put
\begin{align*}
&W^\pm:=\s\{x_1, \ldots, x_s, y_1, \ldots, y_t\},\\
&W_0:=\s\{z_1, \ldots, z_{u-k}\},\\
&W_\nul:=\s\{z_{u-k+1}, \ldots, z_u\}.
\end{align*}
By the assumption, it satisfies $(W^\pm \oplus W_0) \cap \rad V=\{0\}$. Therefore there exists a subspace $U$ of $V$ such that
\begin{align*}
V=U \oplus \rad V, \quad W^\pm \oplus W_0 \subset U.
\end{align*}
Note that $U$ is nondegenerate. Here we define 
\begin{align*}
(W^\pm)^\perp_U:=\{u \in U \mid \forall w \in W^\pm,\ \hoge<u, w>=0\}.
\end{align*}
Since $W^\pm$ is a nondegenerate subspace of $U$, one has $U=W^\pm \oplus (W^\pm)^\perp_U$. Hence we have 
\begin{align*}
V=U \oplus \rad V=W^\pm \oplus (W^\pm)^\perp_U \oplus \rad V.
\end{align*}
Remember that $W_0 \subset (W^\pm)^\perp_U$ and $W_\nul \subset \rad V$. We will construct the bases of $W^\pm$, $(W^\pm)^\perp_U$, and $\rad V$.

Regarding the basis $\{x_1, \ldots, x_s, y_1, \ldots, y_t\}$ of $W^\pm$, we put 
\begin{align}
\label{eq : construction of alpha 1 to s and beta 1 to t}
\alpha_i:=x_i \quad (i \in \{1, \ldots, s\}), \quad \beta_i:=y_i \quad (i \in \{1, \ldots, t\}).
\end{align}

Next we construct a $(p-s, q-t, 0)$-basis of $(W^\pm)^\perp_U$. Recall that $U$ and $W^\pm$ are nondegenerate. Hence $(W^\pm)^\perp_U$ is nondegenerate, and its signature is given by
\begin{align*}
\sign (W^\pm)^\perp_U=(p-s, q-t, 0).
\end{align*}
Since $\{z_1, \ldots, z_{u-k}\}$ is a $(0, 0, u-k)$-system of $W_0$, by Proposition~\ref{prop : extension of basis of sign=(0, 0, k)}, there exists a $(p-s, q-t, 0)$-basis $\{\alpha_{s+1}, \ldots, \alpha_p, \beta_{t+1}, \ldots, \beta_q\}$ of $(W^\pm)^\perp_U$ such that
\begin{align}
\label{eq : z 1 to u-k}
z_i=\alpha_{s+i}+\beta_{t+i} \quad (i \in \{1, \ldots, u-k\}).
\end{align}

Finally we construct a $(0, 0, r)$-basis of $\rad V$. Since $\{z_{u-k+1}, \ldots, z_u\}$ is a basis of $W_\nul$ and $W_\nul \subset \rad V$, there exists a basis $\{\gamma_1, \ldots, \gamma_r\}$ of $\rad V$ such that
\begin{align}
\label{eq : construction of gamma 1 to k}
\gamma_i=z_{u-k+i} \quad (i \in \{1, \ldots, k\}).
\end{align}
From (\ref{eq : construction of alpha 1 to s and beta 1 to t}), (\ref{eq : z 1 to u-k}) and (\ref{eq : construction of gamma 1 to k}), one obtains the desired $(p, q, r)$-basis of $V$, which completes the proof.
\end{proof}

\subsection{The proof of Proposition~\ref{prop : flag mfd iff signature and radical}}
\label{sub2}

In this subsection, we prove Proposition~\ref{prop : flag mfd iff signature and radical}. First of all, we show that one can extend a given linear isometry between subspaces to the entire nondegenerate space.

\begin{Prop}
\label{prop : extension of isometry on W to an isometry on V}
Let $V$ be a nondegenerate space, and $W_1$ and $W_2$ be subspaces of $V$ with $\sign (W_1, \hoge< , >)=\sign (W_2, \hoge< , >)$. Then for any linear isometry $f: W_1 \to W_2$, there exists a linear isometry $\widetilde{f}: V \to V$ such that $\widetilde{f}\mid_{W_1}=f$.
\end{Prop}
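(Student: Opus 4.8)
The plan is to build the extension $\widetilde f$ block by block, following the $(p,q,r)$-basis machinery developed in Subsection~\ref{sub1}. First I would decompose the domain and codomain compatibly. Let $(s,t,u):=\sign(W_1,\hoge< , >)=\sign(W_2,\hoge< , >)$, and let $k_1:=\dim(W_1\cap\rad W_1)$... but wait — since $V$ is nondegenerate, $\rad W_i$ is exactly $W_i\cap W_i^\perp$, which need not be the same as $W_i\cap\rad V=\{0\}$. So instead I would directly fix an $(s,t,u)$-basis $\{x_1,\dots,x_s,y_1,\dots,y_t,z_1,\dots,z_u\}$ of $W_1$ with the $z_j$ spanning $\rad W_1$. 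Applying Proposition~\ref{prop : extension of a given basis with the weakest assumption and the strongest result} (with $\rad V=\{0\}$, so $k=0$) extends this to a $(p,q,0)$-basis $\{\alpha_1,\dots,\alpha_p,\beta_1,\dots,\beta_q\}$ of $V$ in which $x_i=\alpha_i$, $y_i=\beta_i$, and $z_i=\alpha_{s+i}+\beta_{t+i}$ for $i\in\{1,\dots,u\}$, where $(p,q):=\sign(V,\hoge< , >)$.

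Next I would do the same on the codomain side. A linear isometry $f:W_1\to W_2$ carries $\rad W_1$ onto $\rad W_2$, so $\{f(x_i),f(y_i),f(z_j)\}$ is again an $(s,t,u)$-basis of $W_2$ with the last $u$ vectors spanning $\rad W_2$. Applying Proposition~\ref{prop : extension of a given basis with the weakest assumption and the strongest result} once more gives a $(p,q,0)$-basis $\{\alpha'_1,\dots,\alpha'_p,\beta'_1,\dots,\beta'_q\}$ of $V$ with $f(x_i)=\alpha'_i$, $f(y_i)=\beta'_i$, and $f(z_i)=\alpha'_{s+i}+\beta'_{t+i}$. Now I would simply define $\widetilde f:V\to V$ to be the linear map sending $\alpha_j\mapsto\alpha'_j$ and $\beta_j\mapsto\beta'_j$ for all $j$. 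Since both bases satisfy the same Gram relations $\hoge<\alpha_i,\alpha_j>=\delta_{ij}$, $\hoge<\beta_i,\beta_j>=-\delta_{ij}$, $\hoge<\alpha_i,\beta_j>=0$, the map $\widetilde f$ is a linear isometry of $V$. It remains to check $\widetilde f\mid_{W_1}=f$: on the $x_i$ and $y_i$ this is immediate, and on $z_i=\alpha_{s+i}+\beta_{t+i}$ one computes $\widetilde f(z_i)=\alpha'_{s+i}+\beta'_{t+i}=f(z_i)$. Since $\{x_i,y_i,z_j\}$ spans $W_1$, linearity gives $\widetilde f\mid_{W_1}=f$.

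The only genuinely delicate point — the "main obstacle" — is verifying that $f$ maps $\rad W_1$ onto $\rad W_2$ so that the two applications of Proposition~\ref{prop : extension of a given basis with the weakest assumption and the strongest result} are fed compatibly structured bases; this follows because $w\in\rad W_1$ iff $\hoge<w,W_1>=0$ iff $\hoge<f(w),f(W_1)>=\hoge<f(w),W_2>=0$ iff $f(w)\in\rad W_2$, using that $f$ is an isometry onto $W_2$. Everything else is a routine bookkeeping exercise with $(p,q,r)$-bases, and no signature-dependent case analysis is needed. One should note that the statement does not claim $\widetilde f$ maps $W_1$ to $W_2$ (only that it restricts to $f$ there), so no further compatibility between the decompositions of $V$ is required.
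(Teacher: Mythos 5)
Your proposal is correct and follows essentially the same route as the paper: fix an $(s,t,u)$-basis of $W_1$, transport it by $f$ to an $(s,t,u)$-basis of $W_2$, extend both to $(p,q,0)$-bases of $V$ via Proposition~\ref{prop : extension of a given basis with the weakest assumption and the strongest result} with $k=0$, and send one basis to the other. The only difference is cosmetic: the point you flag as delicate (that $f$ sends $\rad W_1$ onto $\rad W_2$) is automatic, since the Gram relations defining an $(s,t,u)$-basis are preserved by any linear isometry and already force the last $u$ vectors to span the radical.
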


\begin{proof}
Let $(s, t, u):=\sign W_1=\sign W_2$. Take an arbitrary linear isometry $f: W_1 \to W_2$. Here we fix an $(s, t, u)$-basis 
\begin{align*}
\{x_1, \ldots, x_s, y_1, \ldots, y_t, z_1, \ldots, z_u\}
\end{align*}
of $W_1$. Since $f: W_1 \to W_2$ is a linear isometry, 
\begin{align*}
\{f(x_1), \ldots, f(x_s), f(y_1), \ldots, f(y_t), f(z_1), \ldots, f(z_u)\}
\end{align*}
is an $(s, t, u)$-basis of $W_2$. Note that
\begin{align*}
\dim (W_1 \cap \rad V)=\dim (W_2 \cap \rad V)=0,
\end{align*}
since $V$ is nondegenerate. Then from Proposition~\ref{prop : extension of a given basis with the weakest assumption and the strongest result}, there exist two $(p, q, 0)$-bases 
\begin{align*}
\{\alpha_1, \ldots, \alpha_p, \beta_1, \ldots, \beta_q\}, \quad \{\alpha'_1, \ldots, \alpha'_p, \beta'_1, \ldots, \beta'_q\}
\end{align*}
of $V$ such that
\begin{align}
\label{eq : alpha for extension of linear isometry}
&x_i=\alpha_i, \quad f(x_i)=\alpha'_i \quad (i \in \{1, \ldots, s\}),\\
\label{eq : beta for extension of linear isometry}
&y_i=\beta_i, \quad f(y_i)=\beta'_i \quad (i \in \{1, \ldots, t\}),\\
\label{eq : z and eta for extension of linear isometry}
&z_i=\alpha_{s+i}+\beta_{t+i}, \quad f(z_i)=\alpha'_{s+i}+\beta'_{t+i} \quad (i \in \{1, \ldots, u\}).
\end{align}
Here we define $\widetilde{f}: V \to V$ by mapping the former basis to the latter, that is,
\begin{align*}
\widetilde{f}(\alpha_i):=\alpha'_i \quad (i \in \{1, \ldots, p\}), \quad \widetilde{f}(\beta_i):=\beta'_i \quad (i \in \{1, \ldots, q\}).
\end{align*}
One can easily check that $\widetilde{f}: V \to V$ is a linear isometry such that $\widetilde{f}\mid_{W_1}=f$ from (\ref{eq : alpha for extension of linear isometry}), (\ref{eq : beta for extension of linear isometry}) and (\ref{eq : z and eta for extension of linear isometry}).
\end{proof}

The next lemma follows from basic linear algebra.

\begin{Lem}
\label{lem : f(rad W)=rad f(W)}
Let $W$ be a subspace of $V$, and $f: V \to V$ be a linear isometry with respect to $\hoge< , >$. Then one has $f(\rad (W, \hoge< , >))=\rad (f(W), \hoge< , >)$.
\end{Lem}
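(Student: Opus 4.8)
The plan is to prove the two inclusions $f(\rad(W,\hoge<,>)) \subseteq \rad(f(W),\hoge<,>)$ and $\rad(f(W),\hoge<,>) \subseteq f(\rad(W,\hoge<,>))$ separately, using only the definition of the radical together with the fact that a linear isometry $f$ is a bijection preserving the inner product. Since $f$ is invertible and $f^{-1}$ is again a linear isometry, the second inclusion follows from the first applied to $f^{-1}$ and the subspace $f(W)$, so really only one inclusion needs genuine work.

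For the inclusion $f(\rad W) \subseteq \rad(f(W))$, I would take an arbitrary $v \in \rad(W,\hoge<,>)$, so that $v \in W$ and $\hoge<v,w>=0$ for all $w \in W$. Then $f(v) \in f(W)$, and for any element of $f(W)$, which has the form $f(w)$ with $w \in W$, we compute $\hoge<f(v),f(w)> = \hoge<v,w> = 0$ using that $f$ is an isometry. Hence $f(v) \in \rad(f(W),\hoge<,>)$, giving the first inclusion. Applying this to the isometry $f^{-1}\colon V \to V$ and the subspace $f(W)$ yields $f^{-1}(\rad(f(W))) \subseteq \rad(f^{-1}(f(W))) = \rad(W)$, and applying $f$ to both sides gives $\rad(f(W)) \subseteq f(\rad(W))$. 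Combining the two inclusions finishes the proof.

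There is essentially no obstacle here: the statement is, as the paper itself says, basic linear algebra, and the only mild point of care is to check that $f(W)$ genuinely consists exactly of the vectors $f(w)$ with $w \in W$ (immediate since $f$ is linear) and that $f^{-1}$ is again a linear isometry on all of $V$ (immediate since $f$ is a bijective isometry of $V$). One could alternatively phrase the whole argument in one line by noting that $\rad(W) = W \cap W^\perp$ and that an isometry commutes with both $\cap$ and $(\,\cdot\,)^\perp$, but the direct elementwise argument above is cleanest and self-contained.
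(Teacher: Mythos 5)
Your proof is correct. The paper offers no proof of this lemma (it is stated as ``follows from basic linear algebra''), and your elementwise argument --- the forward inclusion via $\hoge<f(v),f(w)>=\hoge<v,w>$ and the reverse inclusion by applying the same to $f^{-1}$ --- is exactly the standard argument the paper is implicitly relying on.
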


Next we show an equivalent condition for the classification of subspaces by linear isometries.


\begin{Prop}
\label{prop : generalization of O(p, q, r) on G_k(R^{p+q+r})}
For any two subspaces $U$ and $W$ of $V$, the following two conditions are equivalent.
\begin{itemize}
\item[(1)] There exists a linear isometry $f: V \to V$ with respect to $\hoge< , >$ such that $U=f(W)$.
\item[(2)] All of the following hold.
	\begin{itemize}
	\item[(i)] $\sign (U, \hoge< , >)=\sign (W, \hoge< , >)$.
	\item[(ii)] $\dim (U \cap \rad (V, \hoge< , >))=\dim (W \cap \rad (V, \hoge< , >))$.
	\end{itemize}
\end{itemize}
\end{Prop}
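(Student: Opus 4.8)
The implication (1) $\Rightarrow$ (2) is the easy direction: if $U=f(W)$ for a linear isometry $f$ of $V$, then $f\mid_W\colon W\to U$ is a linear isometry of subspaces, which immediately gives (i), and Lemma~\ref{lem : f(rad W)=rad f(W)} together with $f(\rad V)=\rad V$ (apply the lemma with $W=V$) gives $f(W\cap\rad V)=f(W)\cap f(\rad V)=U\cap\rad V$, whence (ii). The substantive direction is (2) $\Rightarrow$ (1), and the plan is to build the desired isometry $f$ in two stages: first produce a linear isometry between the subspaces $W$ and $U$ themselves that respects their radicals in $V$, then extend it to all of $V$ using Proposition~\ref{prop : extension of isometry on W to an isometry on V}.

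\textbf{Main steps.} Write $\sign(W,\hoge<,>)=\sign(U,\hoge<,>)=(s,t,u)$ and $k:=\dim(W\cap\rad V)=\dim(U\cap\rad V)$. By Proposition~\ref{prop : existence of (p, q, r)-basis} choose an $(s,t,u)$-basis of $W$; the key point is that one can arrange its last $k$ light-like vectors to lie in $\rad V$. I would justify this by noting that $W\cap\rad V$ is a $k$-dimensional subspace on which $\hoge<,>$ vanishes identically, pick any basis $z_{u-k+1},\dots,z_u$ of it, extend to a $(0,0,u)$-system of $\rad W$, and then run the argument of Proposition~\ref{prop : existence of (p, q, r)-basis} (or Proposition~\ref{prop : extension of basis of sign=(0, 0, k)}) relative to this partial system to complete it to an $(s,t,u)$-basis of $W$ of the required form. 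Do the same for $U$, obtaining a basis $\{x'_1,\dots,x'_s,y'_1,\dots,y'_t,z'_1,\dots,z'_u\}$ with $z'_{u-k+1},\dots,z'_u\in\rad V$. Now define $f\colon W\to U$ by matching these two bases slot by slot. By Definition~\ref{def : (p, q, r)-basis} both are $(s,t,u)$-systems, so $f$ preserves all the Gram entries and is therefore a linear isometry $W\to U$; moreover by construction $f(W\cap\rad V)=U\cap\rad V$. Finally invoke Proposition~\ref{prop : extension of isometry on W to an isometry on V} (whose hypothesis $\sign W=\sign U$ is exactly (i)) to extend $f$ to a linear isometry $\widetilde f\colon V\to V$; then $U=\widetilde f(W)$, which is (1).

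\textbf{Anticipated obstacle.} The only delicate point is the claim that the $(s,t,u)$-basis of $W$ can be chosen with its degenerate vectors inside $\rad V$ rather than merely inside $\rad W$ — that is, aligning $\rad W$ with $\rad V$ in a controlled way. The inclusion $W\cap\rad V\subset\rad W$ always holds, but the reverse can fail, so one must be careful to start the basis construction from a basis of $W\cap\rad V$ and only afterwards fill in the rest; the existence of the needed completion is guaranteed by Propositions~\ref{prop : existence of (p, q, r)-basis} and \ref{prop : extension of basis of sign=(0, 0, k)} applied inside $W$. Once both bases are in this normal form, the matching map $f$ and its extension $\widetilde f$ are essentially automatic, so the proof is short modulo this bookkeeping. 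I would also remark that Proposition~\ref{prop : flag mfd iff signature and radical} then follows by applying this proposition twice — once to each of the two subspaces in a flag — which is presumably how Section~\ref{sec4} proceeds.
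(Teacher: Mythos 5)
Your direction (1)~$\Rightarrow$~(2) matches the paper's, and your normal-form idea for (2)~$\Rightarrow$~(1) --- choosing $(s,t,u)$-bases of $W$ and $U$ whose last $k$ degenerate vectors lie in $\rad V$, starting from bases of $W\cap\rad V$ and $U\cap\rad V$ --- is exactly the (implicit) first step of the paper's proof. The gap is in your final step. You extend the matching isometry $f\colon W\to U$ by invoking Proposition~\ref{prop : extension of isometry on W to an isometry on V}, but that proposition is stated only for \emph{nondegenerate} $V$, and the present proposition must hold for degenerate $V$: the section's standing convention is that $\langle\,,\,\rangle$ is not necessarily nondegenerate (this is precisely why condition (ii) involves $\rad V$), and in the proof of Proposition~\ref{prop : flag mfd iff signature and radical} the result is applied with ambient space $V_{k_2}$, whose signature can be $(p-2,q-2,2)$, etc. Moreover the extension proposition cannot simply be quoted with the nondegeneracy dropped: if $\rad V\neq\{0\}$, take $W_1$ spanned by a vector of $\rad V$ and $W_2$ spanned by a light-like vector outside $\rad V$; both have signature $(0,0,1)$ and are isometric as subspaces, yet no isometry of $V$ carries one to the other, since isometries preserve $\rad V$. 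So some radical-compatibility hypothesis (which you do arrange for your $f$) is indispensable, and the cited proposition does not provide for it.

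The repair is short and is what the paper actually does: skip the intermediate map $W\to U$ altogether. Feed your two normal-form $(s,t,u)$-bases directly into Proposition~\ref{prop : extension of a given basis with the weakest assumption and the strongest result} (which is formulated for degenerate $V$ and uses both $\sign W$ and $\dim(W\cap\rad V)=k$), obtaining two $(p,q,r)$-bases of $V$ extending them in the prescribed pattern; the linear map sending the first $(p,q,r)$-basis to the second is then a global isometry of $V$ carrying $W$ to $U$. With that substitution your argument coincides with the paper's. (Your closing remark that Proposition~\ref{prop : flag mfd iff signature and radical} follows by applying the present proposition twice is essentially right, but note that the second application takes place inside $V_{k_2}$ and the resulting isometry must afterwards be extended to $\R^{p+q}$ by Proposition~\ref{prop : extension of isometry on W to an isometry on V} --- which is legitimate there because $\R^{p+q}$ is nondegenerate.)
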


\begin{proof}
First we assume (1), and show (2). Let $(s, t, u):=\sign W$ with respect to $\hoge< , >$. Then by Proposition~\ref{prop : existence of (p, q, r)-basis}, there exists an $(s, t, u)$-basis
\begin{align*}
\{x_1, \ldots, x_s, y_1, \ldots, y_t, z_1, \ldots, z_u\}
\end{align*}
of $W$. Since $f\mid_W: W \to U$ is a linear isometry, 
\begin{align*}
\{f(x_1), \ldots, f(x_s), f(y_1), \ldots, f(y_t), f(z_1), \ldots, f(z_u)\}
\end{align*}
is an $(s, t, u)$-basis of $U$. Hence one has $\sign U=\sign W$, which proves (i). Regarding the assertion (ii), by Lemma~\ref{lem : f(rad W)=rad f(W)} we have
\begin{align*}
\rad V=\rad f(V)=f(\rad V),
\end{align*}
thus one has
\begin{align*}
U \cap \rad V=f(W) \cap f(\rad V)=f(W \cap \rad V).
\end{align*}
This completes the proof of (ii).

Next let us assume (2), and we show (1). Put 
\begin{align*}
&(p, q, r):=\sign V,\\
&(s, t, u):=\sign U=\sign W
\end{align*}
We fix $(s, t, u)$-bases of $W$ and $U$ which satisfy the assumption of Proposition~\ref{prop : extension of a given basis with the weakest assumption and the strongest result}. Then they can be extended to $(p, q, r)$-bases
\begin{align*}
&\{\alpha_1, \ldots, \alpha_p, \beta_1, \ldots, \beta_q, \gamma_1, \ldots, \gamma_r\},\\
&\{\alpha'_1, \ldots, \alpha'_p, \beta'_1, \ldots, \beta'_q, \gamma'_1, \ldots, \gamma'_r\}
\end{align*}
of $V$ in the way of Proposition~\ref{prop : extension of a given basis with the weakest assumption and the strongest result}. Let $f: V \to V$ be the linear isometry which maps the former basis to the latter. Then we have $U=f(W)$, which completes the proof.
\end{proof}

Finally we prove Proposition~\ref{prop : flag mfd iff signature and radical} by using Propositions~\ref{prop : extension of isometry on W to an isometry on V} and \ref{prop : generalization of O(p, q, r) on G_k(R^{p+q+r})}.

\begin{proof}[Proof of Proposition~\ref{prop : flag mfd iff signature and radical}]
Take arbitrary $(V_{k_1}, V_{k_2}), (W_{k_1}, W_{k_2}) \in F_{k_1, k_2}$. First of all, we assume (1). Then there exists $g \in \OO(p, q)$ such that
\begin{align*}
(V_{k_1}, V_{k_2})=g.(W_{k_1}, W_{k_2})=(gW_{k_1}, gW_{k_2}).
\end{align*}
Under this assumption, we show (2), that is, we prove the following:
\begin{itemize}
\item[(i)] $\sign (V_{k_2}, \hoge< , >_0)=\sign (W_{k_2}, \hoge< , >_0)$.
\item[(ii)] $\sign (V_{k_1}, \hoge< , >_0)=\sign (W_{k_1}, \hoge< , >_0)$.
\item[(iii)] $\dim (V_{k_1} \cap \rad (V_{k_2}, \hoge< , >_0))=\dim (W_{k_1} \cap \rad (W_{k_2}, \hoge< , >_0))$.
\end{itemize}
The assertions (i) and (ii) follow from Proposition~\ref{prop : generalization of O(p, q, r) on G_k(R^{p+q+r})}, and (iii) holds from Lemma~\ref{lem : f(rad W)=rad f(W)}.

Next we assume (2) and show (1). Since $\sign V_{k_2}=\sign W_{k_2}$ and $\rad \R^{p+q}=\{0\}$, from Proposition~\ref{prop : generalization of O(p, q, r) on G_k(R^{p+q+r})}, there exists a linear isometry $f: \R^{p+q} \to \R^{p+q}$ such that 
\begin{align*}
V_{k_2}=f(W_{k_2}). 
\end{align*}
We then find a linear isometry mapping $f(W_{k_1})$ to $V_{k_1}$. From the assumption (ii), one has
\begin{align}
\label{eq : sign V_{k_1}=sign f(W_{k_1})}
\sign V_{k_1}=\sign W_{k_1}=\sign f(W_{k_1}).
\end{align}
Moreover by $V_{k_2}=f(W_{k_2})$, we have
\begin{align*}
f(W_{k_1}) \cap f(\rad W_{k_2})=f(W_{k_1}) \cap \rad f(W_{k_2})=f(W_{k_1}) \cap \rad V_{k_2}.
\end{align*}
Hence by the assumption (iii), we obtain
\begin{align}
\label{eq : equation between dimension of intersection}
\dim(V_{k_1} \cap \rad V_{k_2})&=\dim(W_{k_1} \cap \rad W_{k_2})=\dim(f(W_{k_1}) \cap \rad V_{k_2}).
\end{align}
Therefore by (\ref{eq : sign V_{k_1}=sign f(W_{k_1})}), (\ref{eq : equation between dimension of intersection}) and Proposition~\ref{prop : generalization of O(p, q, r) on G_k(R^{p+q+r})}, there exists a linear isometry $h: V_{k_2} \to V_{k_2}$ such that
\begin{align*}
V_{k_1}=h(f(W_{k_1})).
\end{align*}
From Proposition~\ref{prop : extension of isometry on W to an isometry on V}, there exists a linear isometry $\widetilde{h}: \R^{p+q} \to \R^{p+q}$ such that
\begin{align*}
\widetilde{h}\mid_{V_{k_2}}=h.
\end{align*}
Hence from the above argument, we have
\begin{align*}
V_{k_1}=(\widetilde{h} \circ f)(W_{k_1}), \quad V_{k_2}=(\widetilde{h} \circ f)(W_{k_2}).
\end{align*}
Since $\widetilde{h} \circ f: \R^{p+q} \to \R^{p+q}$ is a linear isometry with respect to $\hoge< , >_0$, which completes the proof.
\end{proof}

\subsection{The proofs of Propositions~\ref{prop : signature on V_{p+q-2}} and \ref{prop : sign_V W}}
\label{sub3}

In this subsection, we prove Propositions~\ref{prop : signature on V_{p+q-2}} and \ref{prop : sign_V W}. First, we prove Proposition~\ref{prop : signature on V_{p+q-2}}. Recall that $A$ is the set of all possible signatures $\sign (V, \hoge< , >_0)$ of codimension-two subspaces $V$ of $\R^{p+q}$.

\begin{proof}[Proof of Proposition~\ref{prop : signature on V_{p+q-2}}]
First of all, we show that 
\begin{align}
\label{A is included in the RHS}
A \subset \left\{
\begin{array}{lll}
	(p-2, q, 0), &(p-1, q-1, 0), &(p, q-2, 0),\\
	(p-2, q-1, 1), &(p-1, q-2, 1), &(p-2, q-2, 2)
\end{array}\right\} \cap (\Z_{\geq 0})^3.
\end{align}
Take an arbitrary subspace $V$ of $\R^{p+q}$ with $\dim V=p+q-2$, and we put $\sign V=(s, t, u)$, where $s, t, u \in \Z_{\geq 0}$. Then we have
\begin{align}
\label{eq : s+t+u=p+q-2}
s+t+u=p+q-2.
\end{align}
Since $\hoge< , >_0$ is nondegenerate on $\R^{p+q}$, one has by Proposition~\ref{prop : extension of a given basis with the weakest assumption and the strongest result} that
\begin{align}
\label{ineq : s+u<p, t+u<q}
s+u \leq p, \quad t+u \leq q.
\end{align}
By (\ref{eq : s+t+u=p+q-2}) and (\ref{ineq : s+u<p, t+u<q}), we obtain
\begin{align}
\label{ineq : 0<=u<=2}
0 \leq u \leq 2. 
\end{align}
In order to calculate $\sign V$, we have only to enumerate all possible integers $s, t, u \in \Z_{\geq 0}$ satisfying the conditions (\ref{eq : s+t+u=p+q-2}), (\ref{ineq : s+u<p, t+u<q}) and (\ref{ineq : 0<=u<=2}). 

Let us fix $u=0$. By (\ref{eq : s+t+u=p+q-2}) and (\ref{ineq : s+u<p, t+u<q}), we have
\begin{align*}
s+t=p+q-2, \quad 0 \leq s \leq p, \quad 0 \leq t \leq q.
\end{align*}
According to these conditions, we have
\begin{align*}
(s, t) \in \{(p-2, q),\ (p-1, q-1),\ (p, q-2)\} \cap (\Z_{\geq 0})^2.
\end{align*}
For other two cases of $u$, one can summarize as follows:
\begin{itemize}
	\setlength{\itemsep}{3pt} 
\item if $u=1$, then $(s, t) \in \{(p-2, q-1), (p-1, q-2)\} \cap (\Z_{\geq 0})^2$,
\item if $u=2$, then $(s, t) \in \{(p-2, q-2)\} \cap (\Z_{\geq 0})^2$.
\end{itemize}
Therefore by the above arguments, we obtain (\ref{A is included in the RHS}).

One can prove the converse inclusion by constructing subspaces $V$ with prescribed signatures. In fact, by Proposition~\ref{prop : existence of (p, q, r)-basis}, there exists a $(p, q, 0)$-basis $\{x_1, \ldots, x_p, y_1, \ldots, y_q\}$ of $\R^{p+q}$ with respect to $\hoge< , >_0$. Hence, a subspace 
\begin{align*}
V:=\s\{x_1, \ldots, x_{p-2}, y_1, \ldots, y_{q-2}, x_{p-1}+y_{q-1}, x_p+y_q\}
\end{align*}
satisfies $\sign V=(p-2, q-2, 2)$. We can similarly construct subspaces $V$ for the other five triplets, which completes the proof.
\end{proof}

Finally, we prove Proposition~\ref{prop : sign_V W}. Recall that $B$ is the set of all possible signatures $\sign_V (W, \hoge< , >_0)$ of one-dimensional subspaces $W$ of $V$.

\begin{proof}[Proof of Proposition~\ref{prop : sign_V W}]
First of all, we show the first assertion. Since $\sign V=(s, t, u)$, by Proposition~\ref{prop : existence of (p, q, r)-basis}, there exists an $(s, t, u)$-basis
\begin{align*}
\{x_1, \ldots, x_s, y_1, \ldots, y_t, z_1, \ldots, z_u\}
\end{align*}
of $V$. Take an arbitrary $v \in V$. In terms of this basis, it can be expressed as
\begin{align}
\label{eq : basis expression of V}
v=\sum_{i=1}^s a_i x_i+\sum_{j=1}^t b_j y_j+\sum_{k=1}^u c_k z_k,
\end{align}
where $a_1, \ldots, a_s, b_1, \ldots, b_t, c_1, \ldots, c_u \in \R$. Then one has
\begin{align}
\label{eq : norm of v with respect to a and b}
\hoge<v, v>_0=\sum_{i=1}^s {a_i}^2-\sum_{j=1}^t {b_j}^2.
\end{align}
Therefore it is easy to verify the first assertion, that is, $(1, 0, 0) \in B$ if and only if $V$ has a non-zero space-like vector, which is equivalent to $s \geq 1$ by (\ref{eq : basis expression of V}) and (\ref{eq : norm of v with respect to a and b}). We can similarly show the second and the fourth assertions. Regarding the third assertion, $(0, 0, 1) \in B$ if and only if $V$ has a light-like vector $v \notin \rad V$, which is equivalent to $s, t \geq 1$ by (\ref{eq : norm of v with respect to a and b}). This completes the proof.
\end{proof}

\end{document}